\newtheorem{remark}{Remark}
\newtheorem{lemma}[remark]{Lemma}
\newtheorem{theorem}[remark]{Theorem}
\newtheorem{proposition}[remark]{Proposition}
\newtheorem{corollary}[remark]{Corollary}
\title{Resolvability and Strong Resolvability in the Direct Product of Graphs}
\author{Dorota Kuziak$^{a}$, Iztok Peterin$^{b,c}$ and Ismael G. Yero$^{d}$\\[0.2cm]
$^{(a)}$ \small{Departament d'Enginyeria Inform\`atica i Matem\`atiques, Universitat Rovira i Virgili}\\
\small{Av. Pa\"isos Catalans 26, 43007 Tarragona, Spain}\\
$^{(b)}$ \small{University of Maribor, FEECS, Smetanova 17, 2000 Maribor, Slovenia}\\
$^{(c)}$ \small{IMFM, Jadranska 19, 1000 Ljubljana, Slovenia}\\
$^{(d)}$ \small{Departamento de Matem\'aticas, EPS, Universidad de C\'adiz}\\
\small{Av. Ram\'on Puyol s/n, 11202 Algeciras, Spain}}
\begin{document}

\maketitle

\begin{abstract}
Given a connected graph $G$, a vertex $w\in V(G)$ distinguishes two different vertices $u,v$ of $G$
if the distances between $w$ and $u$ and between $w$ and $v$ are different. Moreover, $w$ strongly resolves the pair $u,v$
if there exists some shortest $u-w$ path containing $v$ or some shortest $v-w$ path containing $u$.
A set $W$ of vertices is a (strong) metric generator for $G$ if every pair of vertices of $G$ is
(strongly resolved) distinguished by some vertex of $W$. The smallest cardinality of a (strong) metric
generator for $G$ is called the (strong) metric dimension of $G$. In this article we study the (strong)
metric dimension of some families of direct product graphs.
\end{abstract}

\emph{Keywords:} Metric dimension; strong metric dimension; direct product of graphs; strong resolving graph.

\emph{AMS Subject Classification:} 05C12; 05C76.

\section{Introduction and preliminaries}

Given a graph $G$, a vertex $w\in V(G)$ \emph{distinguishes} two different vertices $u,v$ of $G$, if
$d_G(u,w)\ne d_G(v,w)$, where $d_G(x,y)$ represents the number of edges of a shortest $x-y$ path. Now,
a set $S\subset V(G)$ is said to be a \emph{metric generator} for $G$ if any pair of vertices of $G$ is
distinguished by some element of $S$. Metric generators were introduced by Slater in \cite{leaves-trees},
where they were called \emph{locating sets}, and also independently by Harary and Melter in
\cite{harary}, where they were called \emph{resolving sets}. The terminology of metric
generators, which is a more intuitive definition, was first presented in \cite{seb} according to the role
they play inside the graph. This last name arise from the concept of metric generators of metric spaces.
That is, if we consider the distance function $d_G:V\times V\rightarrow \mathbb{N}$, then $(V,d_G)$ is clearly a metric
space. A metric generator with the smallest possible cardinality among all the metric generators for $G$ is
called a \emph{metric basis} of  $G$, and its cardinality the \emph{metric dimension} of $G$, denoted by $dim(G)$.

Another useful terminology regarding the metric generators of graphs is the following one. If
$S=\{w_1, \ldots, w_k\}$ is an ordered set of vertices, then the {\em metric representation} of a vertex
$v\in V(G)$ with respect to $S$ is the vector $(d_G(v,w_1), \ldots, d_G(v,w_k))$. In this sense, a set
$S$ is a metric generator for $G$ if distinct vertices have distinct metric representation with respect to $S$.

It is readily seen that a metric generator for a graph uniquely distinguishes every vertex of the graph. However,
as it was shown in \cite{seb}, metric generators do not necessarily distinguish graphs in the following sense.
That is: ``\emph{for a given  metric generator $T$ of a graph $H$, whenever $H$ is a subgraph of a graph $G$ and
the metric vectors of the vertices of $H$ relative to $T$ agree in both $H$ and $G$, is $H$ an isometric subgraph
of $G$}?  \emph{Even though the metric vectors relative to a metric generator of a graph distinguish all pairs of
vertices in the graph, they  do not uniquely determine all distances in a graph.}''\footnote{A sentence from
\cite{str-dim-cart-dir}.} In connection with this problem, a stronger notion of metric generators was introduced
in \cite{seb}.  A vertex $w\in V(G)$ \emph{strongly resolves} two different vertices $u,v\in V(G)$ if
$d_G(w,u)=d_G(w,v)+d_G(v,u)$ or $d_G(w,v)=d_G(w,u)+d_G(u,v)$, \emph{i.e.}, there exists some shortest $w-u$ path
containing $v$ or some shortest $w-v$ path containing $u$. A set $S$ of vertices in a connected graph $G$ is a
\emph{strong metric generator} for $G$ if every two vertices of $G$ are strongly resolved by some vertex of $S$.
The smallest cardinality of a strong metric generator for $G$ is called \emph{strong metric dimension} and is
denoted by $dim_s(G)$. A \emph{strong metric basis} of $G$ is a strong metric generator for $G$ of cardinality $dim_s(G)$.

Strong metric generators were further studied in \cite{Oellermann}. We now describe the approach developed there,
regarding the transformation of the problem of finding the strong metric dimension of a graph to the vertex
cover problem. A vertex $u$ of $G$ is \emph{maximally distant} from $v$ if for every neighbor $w$ of $u$ it
follows that $d_G(v,w)\le d_G(v,u)$. The collection of all vertices of $G$ that are maximally distant from some vertex of
the graph is called the {\em boundary} of the graph\footnote{In fact, the boundary $\partial(G)$ of a graph was
defined first in \cite{chartrand3} as the subgraph of $G$ induced by the set mentioned in our article with the
same notation. We follow the approach of \cite{bres-klav-tepeh,cphmps} where the boundary of the graph is just the
set of all boundary vertices.}, see \cite{bres-klav-tepeh,cphmps}, and is denoted by $\partial(G)$.
If $u$ is maximally distant from $v$ and $v$ is maximally distant from $u$, then we say that $u$ and $v$ are
\emph{mutually maximally distant}. If $u$ is maximally distant from $v$, and $v$ is not maximally distant from $u$, then
$v$ has a neighbor $v_1$, such that $d_G(v_1, u) > d_G(v,u)$, {\em i.e.}, $d_G(v_1, u) =d_G(v,u)+1$. We can observe that $u$ is
maximally distant from $v_1$. If $v_1$ is not maximally distant from $u$, then $v_1$ has a neighbor $v_2$, such that
$d_G(v_2, u) >d_G(v_1,u)$. A similar procedure allows to construct a sequence of vertices $v_1,v_2, \ldots\,$ such that
$d_G(v_{i+1}, u) > d_G(v_i, u)$ for every $i$. Since the graph $G$ is finite, this sequence ends with some $v_k$. Thus, for all
neighbors $x$ of $v_k$ we have $d_G(v_k,u) \ge d_G(x,u)$, and so $v_k$ is maximally distant from $u$ and $u$ is maximally
distant from $v_k$. As a consequence, every boundary vertex belongs to the set $S=\{u\in V(G):$ there exists $v\in V(G)$
such that $u,v$  are mutually maximally distant$\}$. Certainly every vertex of $S$ is a boundary vertex. For some basic
graph classes, such as complete graphs $K_n$, complete bipartite graphs $K_{r,s}$,  cycles $C_n$ and hypercube graphs
$Q_k$, the boundary is simply the whole vertex set, and the boundary of a tree consists of its leaves.

In this article we use the notion of ``strong resolving graph'' based on a concept introduced in \cite{Oellermann}.
The \emph{strong resolving graph} of $G$, denoted by $G_{SR}$,  has vertex set $V(G_{SR}) = \partial(G)$ where two
vertices $u,v$ are adjacent in $G_{SR}$ if and only if $u$ and $v$ are mutually maximally distant in $G$
\footnote{In fact, according to \cite{Oellermann} the strong resolving graph $G'_{SR}$ of a graph $G$ has vertex set
$V(G'_{SR})=V(G)$ and two vertices $u,v$ are adjacent in $G'_{SR}$ if and only if $u$ and $v$ are mutually maximally
distant in $G$. So, the strong resolving graph defined here is a subgraph of the strong resolving graph defined in
\cite{Oellermann} and can be obtained from the latter graph by deleting its isolated vertices.}.

A set $S$ of vertices of $G$ is a \emph{vertex cover set} of $G$ if every edge of $G$ is incident with at least one
vertex of $S$. The \emph{vertex cover number}  of $G$, denoted by $\beta(G)$, is the smallest cardinality of a vertex
cover set of $G$. We refer to a $\beta(G)$-set in a graph $G$ as a vertex cover set of cardinality $\beta(G)$. Oellermann
and Peters-Fransen \cite{Oellermann} showed that the problem of finding the strong metric dimension of a connected graph
$G$ can be transformed to the problem of finding the vertex cover number of $G_{SR}$.

\begin{theorem}{\em \cite{Oellermann}}\label{lem_oellerman}
For any connected graph $G$,
$dim_s(G) = \beta(G_{SR}).$
\end{theorem}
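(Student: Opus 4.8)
The plan is to prove the two inequalities $dim_s(G)\ge \beta(G_{SR})$ and $dim_s(G)\le \beta(G_{SR})$ separately, and the whole argument rests on two auxiliary facts about mutually maximally distant (MMD) vertices. The first fact, which drives the lower bound, is that \emph{the only vertices strongly resolving an MMD pair $u,v$ are $u$ and $v$ themselves}. To see this I would suppose some $w\notin\{u,v\}$ strongly resolves $u,v$; without loss of generality $u$ lies on a shortest $w$--$v$ path $w=x_0,x_1,\dots,x_m=v$ with $u=x_j$ and $1\le j\le m-1$. Then the neighbour $x_{j-1}$ of $u$ satisfies $d_G(v,x_{j-1})=d_G(v,u)+1$, contradicting that $u$ is maximally distant from $v$; the symmetric case (with $v$ on a shortest $w$--$u$ path) contradicts that $v$ is maximally distant from $u$. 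Granting this, the inequality $dim_s(G)\ge\beta(G_{SR})$ is immediate: if $S$ is a strong metric basis, then for every edge $uv$ of $G_{SR}$ (i.e.\ every MMD pair) the set $S$ must contain $u$ or $v$, so $S\cap V(G_{SR})=S\cap\partial(G)$ is a vertex cover of $G_{SR}$ and hence $dim_s(G)=|S|\ge\beta(G_{SR})$.

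The second fact, which drives the upper bound, is an \emph{extension lemma}: for any two distinct vertices $a,b$ there exist MMD vertices $u,v$ lying so that $a,b$ are both on a common shortest $u$--$v$ path, in the order $u,\dots,a,\dots,b,\dots,v$. I would build these by a monotone ``push to the boundary'' process: starting from $b$ and repeatedly moving to a neighbour strictly farther from $a$ produces a vertex $v$ maximally distant from $a$ with $d_G(a,v)=d_G(a,b)+d_G(b,v)$; then starting from $a$ and moving to neighbours strictly farther from $v$ produces a vertex $u$ maximally distant from $v$ with $d_G(v,u)=d_G(v,a)+d_G(a,u)$. Both processes terminate because the relevant distance strictly increases at each step and $G$ is finite. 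A short triangle-inequality computation then shows $d_G(u,v)=d_G(u,b)+d_G(b,v)$ and $d_G(u,b)=d_G(u,a)+d_G(a,b)$, placing $a$ and $b$ on a common $u$--$v$ geodesic in the claimed order; consequently both $u$ and $v$ strongly resolve $a,b$.

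With the extension lemma the bound $dim_s(G)\le\beta(G_{SR})$ follows: taking $C$ to be a minimum vertex cover of $G_{SR}$, for each pair $a,b$ the edge $uv$ produced above is covered by $C$, so $C$ contains $u$ or $v$, and either choice strongly resolves $a,b$; thus $C$ is a strong metric generator and $dim_s(G)\le|C|=\beta(G_{SR})$. Combining the two inequalities gives the claimed equality.

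The step I expect to be the main obstacle is verifying the extension lemma---specifically, proving that the two independently constructed vertices $u$ and $v$ are genuinely \emph{mutually} maximally distant. The delicate point is that $v$ was made maximally distant only from $a$, not from $u$; showing that $v$ remains maximally distant from $u$ requires combining $d_G(a,v')\le d_G(a,v)$ for every neighbour $v'$ of $v$ with the geodesic identity $d_G(u,v)=d_G(u,a)+d_G(a,v)$ through the triangle inequality $d_G(u,v')\le d_G(u,a)+d_G(a,v')$. One should also dispose of the degenerate cases in which the push process is trivial (when $b$ is already maximally distant from $a$, or $a$ from $v$), and note that $u\neq v$ since $a\neq b$, so that $uv$ is a bona fide edge of $G_{SR}$.
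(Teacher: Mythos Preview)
The paper does not supply its own proof of this theorem; it is quoted as a result of Oellermann and Peters-Fransen and used as a black box throughout. Hence there is no in-paper argument to compare against. Your proposal is correct and is essentially the standard argument behind the cited result. In particular, the monotone ``push to the boundary'' construction you use for the extension lemma is already foreshadowed in the paper's expository paragraph on boundary vertices preceding the theorem statement, and your treatment of the one genuinely delicate step---showing that $v$, which was built to be maximally distant from $a$, is also maximally distant from $u$---is exactly right: from $d_G(u,v)=d_G(u,a)+d_G(a,v)$ and the triangle inequality one gets $d_G(u,v')\le d_G(u,a)+d_G(a,v')\le d_G(u,a)+d_G(a,v)=d_G(u,v)$ for every neighbour $v'$ of $v$. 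The remaining verifications (that $d_G(a,v)=d_G(a,b)+d_G(b,v)$ along the push, that $u\neq v$, and that each of $u,v$ strongly resolves $a,b$) are routine and handled as you indicate.
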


On the other hand, it was shown in \cite{landmarks} and \cite{Oellermann}, that the problems of computing $dim(G)$
and $dim_s(G)$, respectively, are NP-complete. This suggests finding the (strong) metric dimension for special
classes of graphs or reducing the problem of computing the (strong) metric dimension for a graph to that of other
simpler graphs. That is the case of product graphs, where the study of a given parameter can be, in general,
reduced to study such a parameter for the factors of the product. The (strong) metric dimension of product graphs
has been recently studied in several articles. For instance, the metric dimension of Cartesian, lexicographic,
strong and corona product graphs was studied in \cite{pelayo1}, \cite{JanOmo2012,Saputro2013}, \cite{dim-strong}
and \cite{CMWA}, respectively. Also, the strong metric dimension of Cartesian, strong and corona product graphs
was studied in \cite{Oellermann,str-dim-cart-dir}, \cite{Strong-strong,Kuziak-Erratum} and \cite{StrongCorona+Join}, respectively.
Moreover, in \cite{str-dim-cart-dir} a few results on the strong metric dimension of some particular
cases of direct product graphs were presented. In this paper we continue
with the study of the (strong) metric dimension of direct products of graphs for those suitable cases.

We state some extra terminology and notation which we use throughout the article. Given a simple graph $G=(V,E)$, we
denote two adjacent vertices $u,v$ by $u\sim v$ and, in this case, we say that $uv$ is an edge of $G$, \emph{i.e.}, $uv\in E$.
For a vertex $v\in V,$ the set $N_G(v)=\{u\in V: \; u\sim v\}$ is the \emph{open neighborhood} of $v$ and the set
$N_G[v] = N_G(v)\cup \{v\}$ is the \emph{closed neighborhood} of $v$. The \emph{diameter} of $G$ is defined as
$D(G)=\max_{u,v\in V}\{d_G(u,v)\}$. The vertex  $x\in V$ is \emph{diametrical} in $G$ if there exists $y\in V$ such that
$d_G(x,y)=D(G)$. We say that $G$ is $2$-\emph{antipodal} if for each vertex $x\in V$ there exists exactly one vertex
$y\in V$ such that $d_G(x,y)=D(G)$. The graph complement of a graph $G$ is denoted by $G^c$. The edgeless graph and
the complete graph of order $n$ are denoted by $N_n$ and $K_n$, respectively.

The {\em direct product of two graphs} $G$ and $H$ is the graph $G\times H$, such that
$V(G\times H)=V(G)\times V(H)$ and two vertices $(g,h),(g',h')$ are adjacent in $G\times H$
if and only if $gg'\in E(G)$ and $hh'\in E(H)$. Even basic graph properties, such as connectedness are nontrivial for
the direct product. Indeed, $G\times H$ is not necessarily connected, even if both
factors are. This happens exactly when both factors are bipartite (and
connected) and in this case there are exactly two components (see \cite{Hammack2011}
or \cite{Weic}). Also, the expression
\begin{equation}\label{distance}
\min \{\max \{d_{G}^{e}(g,g'),d_{H}^{e}(h,h')\},
\max \{d_{G}^{o}(g,g'),d_{H}^{o}(h,h')\}\},
\end{equation}%
equal to $d_{G\times H}((g,h),(g',h'))$ (see \cite{Kim}), is much more complicated for the direct product
than for other products. Here $d_{G}^{e}(g,g')$ means the length of a shortest walk of even
length between $g$ and $g'$ in $G$ and $d_{G}^{o}(g,g')$ the length of a shortest odd walk between $g$ and $g'$ in $G$. If such a
walk does not exist, we set $d_{G}^{e}(g,g')$ or $d_{G}^{o}(g,g')$ to be infinite.
In contrast to distances, the direct product is the most natural product for open neighborhoods:
\begin{equation}\label{neigh}
N_{G\times H}(g,h)=N_G(g)\times N_H(h).
\end{equation}

As we observe in the last section, several connections between the direct product of graphs and other
products are possible. In this sense, we recall the definitions of the Cartesian and the lexicographic product.
The \emph{Cartesian product of two graphs} $G$ and $H$ is the graph $G\Box H$, such that $V(G\Box H)=V(G)\times V(H)$
and two vertices $(g,h),(g',h')$ are adjacent in  $G\Box H$ if and only if, either ($g=g'$ and $hh'\in E(H)$) or ($h=h'$ and
$gg'\in E(G)$). The \emph{lexicographic
product of two graphs} $G$ and $H$ is the graph $G\circ H$ with $V(G\circ H)=V(G)\times V(H)$ and two vertices $(g,h),(g',h')$
are adjacent in  $G\circ H$ if and only if, either ($g=g'$ and $hh'\in E(H)$) or $gg'\in E(G)$. For more information on
properties of product graphs we suggest a very interesting book \cite{Hammack2011}.


\section{Metric dimension}

As we have mentioned before, the study of properties regarding distances in the direct product of graphs is quite
complicate and could produce several tedious procedures. Also, we ignore the direct product of two bipartite graphs, which
is not connected, and therefore, not interesting for our purposes. In this section we mainly deal with the case in which at least one
of the factors of the product is a complete graph.

\begin{theorem}\label{K-r-K-n}
Let $r\ge 2$ and $t\ge 3$ be integers with $t\ge r$. If $a$ is the smallest nonnegative integer for which
$r'\leq \left\lfloor\frac{t'}{2}\right\rfloor +1$ where $r'=r-3a$ and $t'=t-3a$, then
\begin{equation} \label{formula}
dim(K_r\times K_t)=\left\{\begin{array}{ll}
                                 t+a-1, & \mbox{if $r'<\left\lfloor\frac{t'}{2}\right\rfloor+1$ or $(r'=\frac{t'-1}{2}+1$ and $t'$ is odd$)$}, \\
                                 & \\
                                 t+a, & \mbox{if $r'=\frac{t'}{2}+1$ and $t'$ is even}.
                               \end{array}
\right.
\end{equation}
\end{theorem}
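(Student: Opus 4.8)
```latex
The plan is to compute $dim(K_r \times K_t)$ directly from the distance structure of the direct product, which is very simple here because both factors are complete. Since $K_r$ and $K_t$ are non-bipartite (as $r,t \ge 3$, except for the boundary case $r=2$ which I treat separately), the product $K_r \times K_t$ is connected. First I would pin down the metric in $K_r \times K_t$ using formula~\eqref{distance}. For complete factors one has $d^o_{K_n}(x,y) = 1$ whenever $x \sim y$, while $d^e_{K_n}(x,x)=0$ and $d^e_{K_n}(x,y)=2$ for $x \ne y$, with the odd walk distance between equal vertices being $2$ (or larger). Working this out, I expect $d((g,h),(g',h')) \in \{0,1,2,3\}$, with $d=1$ exactly when $g \ne g'$ and $h \ne h'$, $d=2$ when exactly one coordinate agrees, and $d=3$ arising only in constrained situations. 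The key combinatorial content is that two vertices are \emph{twins} (have the same distance to every other vertex) precisely when they share a coordinate, so the metric representation collapses along these shared-coordinate classes.

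Next I would reformulate resolvability in terms of these distance classes. The essential observation is that a vertex $(a,b)$ distinguishes $(g,h)$ from $(g',h')$ only through the pattern of coordinate-agreements between $(a,b)$ and each of the two vertices. This reduces the problem to a covering condition on the two ``fibers'' of the product: a set $S \subseteq V(K_r \times K_t)$ is a metric generator if and only if its projections interact correctly with every pair of vertices sharing a first or second coordinate. I would make this precise by showing that the condition for $S$ to resolve all pairs is equivalent to a combinatorial requirement on how many rows and columns $S$ must hit, together with a parity/counting constraint that explains the appearance of $\lfloor t'/2 \rfloor + 1$ and the even/odd split in~\eqref{formula}.

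The heart of the argument—and the step I expect to be the main obstacle—is the recursive reduction encoded by the parameter $a$. The substitution $r' = r - 3a$, $t' = t - 3a$ suggests that resolving pairs can be handled in ``blocks'' of three, where each block of $3$ vertices in an appropriate auxiliary structure can be covered by $2$ chosen vertices, saving one vertex per block and thereby contributing the $+a$ term. I would set up an explicit construction of a metric generator of the claimed size by greedily grouping rows/columns into triples until the residual instance satisfies $r' \le \lfloor t'/2 \rfloor + 1$, and then handle the small residual instance by a direct ad hoc argument. The condition defining $a$ as the \emph{smallest} such nonnegative integer guarantees we stop exactly when no further block-saving is possible.

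Finally, I would prove the matching lower bound. Here I expect to argue that any metric generator must contain vertices witnessing distinctions within each shared-coordinate class, and a counting argument (of the vertices forced into $S$ across all rows and columns, accounting for the unavoidable overlaps) yields $dim \ge t + a - 1$, with the extra $+1$ forced precisely in the case $r' = \tfrac{t'}{2}+1$ with $t'$ even. The delicate point will be establishing that the three regimes in~\eqref{formula}—the strict inequality $r' < \lfloor t'/2\rfloor + 1$, the odd boundary case $r' = \tfrac{t'-1}{2}+1$, and the even boundary case $r' = \tfrac{t'}{2}+1$—genuinely produce the stated values and not one unit more or less; this is where the parity of $t'$ enters, since the construction can pair up residual columns only when an appropriate number of them is available, and an odd leftover behaves differently from an even one. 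I would verify these boundary cases by a careful direct analysis of the residual instance rather than by the recursive block argument.
```
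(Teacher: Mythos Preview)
Your outline has the right architecture, but there are two inaccuracies and one genuine gap.

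The inaccuracies: for $r,t\ge 3$ the diameter of $K_r\times K_t$ is exactly $2$ (both factors contain triangles, so the odd walk distance from any vertex to itself is $3$, and $\min\{2,3\}=2$); distance~$3$ only enters when $r=2$. Also, vertices sharing a coordinate are not twins---$(g,h)$ and $(g,h')$ have different open neighborhoods---though it is true, and is what you actually need, that they are distinguished only by vertices whose second coordinate lies in $\{h,h'\}$. More seriously, your block arithmetic is off: passing from $(r,t)$ to $(r-3,t-3)$ decreases $a$ by one and decreases the value in~\eqref{formula} by exactly four, not two or three. In the paper's construction the ``block'' set $S_1$ has cardinality $4a$, each triple of rows and columns being handled by four vertices (two in a common row and two in a common column); your ``$2$ chosen vertices per block of $3$'' does not yield a resolving set and does not match the numerics.

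The gap is the lower bound for $a>0$. The projection observation gives only $\dim\ge t-1$; the extra $+a$ is where all the work lies, and a straight ``counting of vertices forced into $S$ across rows and columns'' does not suffice, because a single vertex of $S$ can simultaneously serve many rows and columns. The paper proceeds by induction on $a$: when $r>\lfloor t/2\rfloor+1$ one shows that any metric basis $S$ must contain two vertices projecting to a common column of $K_t$ and two vertices projecting to a common row of $K_r$ (otherwise the projection constraints combined with $r>\lfloor t/2\rfloor+1$ leave an unresolved pair). These at least four vertices of $S$ sit inside a small corner; every vertex outside that corner is at distance~$1$ from all of them, so they contribute nothing to distinguishing the induced $K_{r-3}\times K_{t-3}$ (or, in a sub-case, $K_{r-2}\times K_{t-2}$), and the induction hypothesis finishes. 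Your plan needs this structural step, or a genuine substitute for it; as written, ``a counting argument'' is a placeholder for the hardest part of the proof.
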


\begin{proof}
Let $U=\{g_1,\ldots,g_r\}$ and $V=\{h_1,\ldots,h_t\}$ be the vertex sets of $K_r$ and $K_t$, respectively.
Let $a$ be the smallest nonnegative integer for which $r'\leq \left\lfloor\frac{t'}{2}\right\rfloor +1$
where $r'=r-3a$ and $t'=t-3a$. We define the following sets:
\begin{itemize}
\item $S_1=\{(g_{\alpha},h_{2\alpha-1}),(g_{\alpha},h_{2\alpha}),(g_{a+2\alpha-1},h_{2a+\alpha}),(g_{a+2\alpha},h_{2a+\alpha})\,:\,1\le \alpha\le a\}$, 
for the case $a\ne 0$, and if $a=0$, then we assume $S_1=\emptyset$;
\item $S_2=\{(g_{3a+\beta},h_{3a+2\beta-1}),(g_{3a+\beta},h_{3a+2\beta})\,:\, 1\le \beta\le r'-1\}$;
\item $S_3=\{(g_{r-1},h_{\gamma})\,:\, 3a+2r'-1\le \gamma\le t-1\}$.
\end{itemize}

It is clear that the sets $S_1,S_2$ and $S_3$ are pairwise disjoint and that $|S_1|=4a$.
On the other hand, we have $|S_2|=2r'-2$ for the general case, which further gives $|S_2|=2r'-2=t'-1$,
when $r'=\frac{t'-1}{2}+1$ and $t'$ is odd, and $|S_2|=2r'-2=t'$ when $r'=\frac{t'}{2}+1$ and $t'$ is even.
For $|S_3|$ notice that $2r-3a-1=3a+2r'-1\leq \gamma \leq t-1$, which implies that $|S_3|=t-2r+3a+1$.
Also, notice that $S_3$ is nonempty only when $0<t-2r+3a+1=t'-2r'+1$, which gives 
$r'<\left\lfloor\frac{t'}{2}\right\rfloor+\frac{1}{2}<\left\lfloor\frac{t'}{2}\right\rfloor+1$.

For the case $r=t=3k$, let $S=S_1-\{(g_{3k},h_{3k})\}$. In this case we have $a=k, r'=t'=0$ and $|S|=4a-1=3k+a-1=t+a-1$. Let now $r\neq t$ or $r=t\neq 3k$.
For $S=S_1\cup S_2\cup S_3$ we obtain $|S|=4a+2r'-2+t-2r+3a+1=t+a-1$, whenever $S_3\ne \emptyset$ (or equivalently $r'<\left\lfloor\frac{t'}{2}\right\rfloor+1$). For $r'=\frac{t'-1}{2}+1$ and $t'$ is odd we get $|S|=4a+t'-1=t+a-1$, and for $r'=\frac{t'}{2}+1$ and $t'$ is even we get $|S|=4a+t'=t+a$, since $S_3=\emptyset$ in both cases. See Figure \ref{sets-basis} for typical examples. In $(i)$: $a=0$ and the condition $r'<\left\lfloor\frac{t'}{2}\right\rfloor+1$
is fulfilled, in $(ii)$: $a=0$, $t=t'$ is odd and $r'=\frac{t'-1}{2}+1$, in $(iii)$: $a=0$, $t=t'$ is even and $r'=\frac{t'}{2}+1$,
and in $(iv)$: $a=2$  and $r'<\left\lfloor\frac{t'}{2}\right\rfloor+1$.

\begin{figure}[ht!]
\begin{center}
\begin{tikzpicture}[scale=0.6,style=thick]
\def\vr{4pt}
\draw [fill=black] (0,0) circle (\vr);
\draw [fill=white] (1,0) circle (\vr);
\draw [fill=white] (2,0) circle (\vr);
\draw [fill=white] (3,0) circle (\vr);

\draw [fill=black] (0,1) circle (\vr);
\draw [fill=white] (1,1) circle (\vr);
\draw [fill=white] (2,1) circle (\vr);
\draw [fill=white] (3,1) circle (\vr);

\draw [fill=white] (0,2) circle (\vr);
\draw [fill=black] (1,2) circle (\vr);
\draw [fill=white] (2,2) circle (\vr);
\draw [fill=white] (3,2) circle (\vr);

\draw [fill=white] (0,3) circle (\vr);
\draw [fill=black] (1,3) circle (\vr);
\draw [fill=white] (2,3) circle (\vr);
\draw [fill=white] (3,3) circle (\vr);

\draw [fill=white] (0,4) circle (\vr);
\draw [fill=white] (1,4) circle (\vr);
\draw [fill=black] (2,4) circle (\vr);
\draw [fill=white] (3,4) circle (\vr);

\draw [fill=white] (0,5) circle (\vr);
\draw [fill=white] (1,5) circle (\vr);
\draw [fill=black] (2,5) circle (\vr);
\draw [fill=white] (3,5) circle (\vr);

\draw [fill=white] (0,6) circle (\vr);
\draw [fill=white] (1,6) circle (\vr);
\draw [fill=black] (2,6) circle (\vr);
\draw [fill=white] (3,6) circle (\vr);

\draw [fill=white] (0,7) circle (\vr);
\draw [fill=white] (1,7) circle (\vr);
\draw [fill=black] (2,7) circle (\vr);
\draw [fill=white] (3,7) circle (\vr);

\draw [fill=white] (0,8) circle (\vr);
\draw [fill=white] (1,8) circle (\vr);
\draw [fill=white] (2,8) circle (\vr);
\draw [fill=white] (3,8) circle (\vr);


\draw [fill=black] (6,0) circle (\vr);
\draw [fill=white] (7,0) circle (\vr);
\draw [fill=white] (8,0) circle (\vr);
\draw [fill=white] (9,0) circle (\vr);
\draw [fill=white] (10,0) circle (\vr);

\draw [fill=black] (6,1) circle (\vr);
\draw [fill=white] (7,1) circle (\vr);
\draw [fill=white] (8,1) circle (\vr);
\draw [fill=white] (9,1) circle (\vr);
\draw [fill=white] (10,1) circle (\vr);

\draw [fill=white] (6,2) circle (\vr);
\draw [fill=black] (7,2) circle (\vr);
\draw [fill=white] (8,2) circle (\vr);
\draw [fill=white] (9,2) circle (\vr);
\draw [fill=white] (10,2) circle (\vr);

\draw [fill=white] (6,3) circle (\vr);
\draw [fill=black] (7,3) circle (\vr);
\draw [fill=white] (8,3) circle (\vr);
\draw [fill=white] (9,3) circle (\vr);
\draw [fill=white] (10,3) circle (\vr);

\draw [fill=white] (6,4) circle (\vr);
\draw [fill=white] (7,4) circle (\vr);
\draw [fill=black] (8,4) circle (\vr);
\draw [fill=white] (9,4) circle (\vr);
\draw [fill=white] (10,4) circle (\vr);

\draw [fill=white] (6,5) circle (\vr);
\draw [fill=white] (7,5) circle (\vr);
\draw [fill=black] (8,5) circle (\vr);
\draw [fill=white] (9,5) circle (\vr);
\draw [fill=white] (10,5) circle (\vr);

\draw [fill=white] (6,6) circle (\vr);
\draw [fill=white] (7,6) circle (\vr);
\draw [fill=white] (8,6) circle (\vr);
\draw [fill=black] (9,6) circle (\vr);
\draw [fill=white] (10,6) circle (\vr);

\draw [fill=white] (6,7) circle (\vr);
\draw [fill=white] (7,7) circle (\vr);
\draw [fill=white] (8,7) circle (\vr);
\draw [fill=black] (9,7) circle (\vr);
\draw [fill=white] (10,7) circle (\vr);

\draw [fill=white] (6,8) circle (\vr);
\draw [fill=white] (7,8) circle (\vr);
\draw [fill=white] (8,8) circle (\vr);
\draw [fill=white] (9,8) circle (\vr);
\draw [fill=white] (10,8) circle (\vr);


\draw [fill=black] (13,0) circle (\vr);
\draw [fill=white] (14,0) circle (\vr);
\draw [fill=white] (15,0) circle (\vr);
\draw [fill=white] (16,0) circle (\vr);
\draw [fill=white] (17,0) circle (\vr);

\draw [fill=black] (13,1) circle (\vr);
\draw [fill=white] (14,1) circle (\vr);
\draw [fill=white] (15,1) circle (\vr);
\draw [fill=white] (16,1) circle (\vr);
\draw [fill=white] (17,1) circle (\vr);

\draw [fill=white] (13,2) circle (\vr);
\draw [fill=black] (14,2) circle (\vr);
\draw [fill=white] (15,2) circle (\vr);
\draw [fill=white] (16,2) circle (\vr);
\draw [fill=white] (17,2) circle (\vr);

\draw [fill=white] (13,3) circle (\vr);
\draw [fill=black] (14,3) circle (\vr);
\draw [fill=white] (15,3) circle (\vr);
\draw [fill=white] (16,3) circle (\vr);
\draw [fill=white] (17,3) circle (\vr);

\draw [fill=white] (13,4) circle (\vr);
\draw [fill=white] (14,4) circle (\vr);
\draw [fill=black] (15,4) circle (\vr);
\draw [fill=white] (16,4) circle (\vr);
\draw [fill=white] (17,4) circle (\vr);

\draw [fill=white] (13,5) circle (\vr);
\draw [fill=white] (14,5) circle (\vr);
\draw [fill=black] (15,5) circle (\vr);
\draw [fill=white] (16,5) circle (\vr);
\draw [fill=white] (17,5) circle (\vr);

\draw [fill=white] (13,6) circle (\vr);
\draw [fill=white] (14,6) circle (\vr);
\draw [fill=white] (15,6) circle (\vr);
\draw [fill=black] (16,6) circle (\vr);
\draw [fill=white] (17,6) circle (\vr);

\draw [fill=white] (13,7) circle (\vr);
\draw [fill=white] (14,7) circle (\vr);
\draw [fill=white] (15,7) circle (\vr);
\draw [fill=black] (16,7) circle (\vr);
\draw [fill=white] (17,7) circle (\vr);


\draw [fill=black] (20,0) circle (\vr);
\draw [fill=white] (21,0) circle (\vr);
\draw [fill=white] (22,0) circle (\vr);
\draw [fill=white] (23,0) circle (\vr);
\draw [fill=white] (24,0) circle (\vr);
\draw [fill=white] (25,0) circle (\vr);
\draw [fill=white] (26,0) circle (\vr);
\draw [fill=white] (27,0) circle (\vr);

\draw [fill=black] (20,1) circle (\vr);
\draw [fill=white] (21,1) circle (\vr);
\draw [fill=white] (22,1) circle (\vr);
\draw [fill=white] (23,1) circle (\vr);
\draw [fill=white] (24,1) circle (\vr);
\draw [fill=white] (25,1) circle (\vr);
\draw [fill=white] (26,1) circle (\vr);
\draw [fill=white] (27,1) circle (\vr);

\draw [fill=white] (20,2) circle (\vr);
\draw [fill=black] (21,2) circle (\vr);
\draw [fill=white] (22,2) circle (\vr);
\draw [fill=white] (23,2) circle (\vr);
\draw [fill=white] (24,2) circle (\vr);
\draw [fill=white] (25,2) circle (\vr);
\draw [fill=white] (26,2) circle (\vr);
\draw [fill=white] (27,2) circle (\vr);

\draw [fill=white] (20,3) circle (\vr);
\draw [fill=black] (21,3) circle (\vr);
\draw [fill=white] (22,3) circle (\vr);
\draw [fill=white] (23,3) circle (\vr);
\draw [fill=white] (24,3) circle (\vr);
\draw [fill=white] (25,3) circle (\vr);
\draw [fill=white] (26,3) circle (\vr);
\draw [fill=white] (27,3) circle (\vr);

\draw [fill=white] (20,4) circle (\vr);
\draw [fill=white] (21,4) circle (\vr);
\draw [fill=black] (22,4) circle (\vr);
\draw [fill=black] (23,4) circle (\vr);
\draw [fill=white] (24,4) circle (\vr);
\draw [fill=white] (25,4) circle (\vr);
\draw [fill=white] (26,4) circle (\vr);
\draw [fill=white] (27,4) circle (\vr);

\draw [fill=white] (20,5) circle (\vr);
\draw [fill=white] (21,5) circle (\vr);
\draw [fill=white] (22,5) circle (\vr);
\draw [fill=white] (23,5) circle (\vr);
\draw [fill=black] (24,5) circle (\vr);
\draw [fill=black] (25,5) circle (\vr);
\draw [fill=white] (26,5) circle (\vr);
\draw [fill=white] (27,5) circle (\vr);

\draw [fill=white] (20,6) circle (\vr);
\draw [fill=white] (21,6) circle (\vr);
\draw [fill=white] (22,6) circle (\vr);
\draw [fill=white] (23,6) circle (\vr);
\draw [fill=white] (24,6) circle (\vr);
\draw [fill=white] (25,6) circle (\vr);
\draw [fill=black] (26,6) circle (\vr);
\draw [fill=white] (27,6) circle (\vr);

\draw [fill=white] (20,7) circle (\vr);
\draw [fill=white] (21,7) circle (\vr);
\draw [fill=white] (22,7) circle (\vr);
\draw [fill=white] (23,7) circle (\vr);
\draw [fill=white] (24,7) circle (\vr);
\draw [fill=white] (25,7) circle (\vr);
\draw [fill=black] (26,7) circle (\vr);
\draw [fill=white] (27,7) circle (\vr);

\draw [fill=white] (20,8) circle (\vr);
\draw [fill=white] (21,8) circle (\vr);
\draw [fill=white] (22,8) circle (\vr);
\draw [fill=white] (23,8) circle (\vr);
\draw [fill=white] (24,8) circle (\vr);
\draw [fill=white] (25,8) circle (\vr);
\draw [fill=black] (26,8) circle (\vr);
\draw [fill=white] (27,8) circle (\vr);

\draw [fill=white] (20,9) circle (\vr);
\draw [fill=white] (21,9) circle (\vr);
\draw [fill=white] (22,9) circle (\vr);
\draw [fill=white] (23,9) circle (\vr);
\draw [fill=white] (24,9) circle (\vr);
\draw [fill=white] (25,9) circle (\vr);
\draw [fill=white] (26,9) circle (\vr);
\draw [fill=white] (27,9) circle (\vr);


\draw (1.5,-1.5) node {$(i)$};
\draw (8,-1.5) node {$(ii)$};
\draw (15,-1.5) node {$(iii)$};
\draw (23,-1.5) node {$(iv)$};
\end{tikzpicture}
\end{center}
\caption{The set $S$ is drawn in bold (edges have been omitted).} \label{sets-basis}
\end{figure}
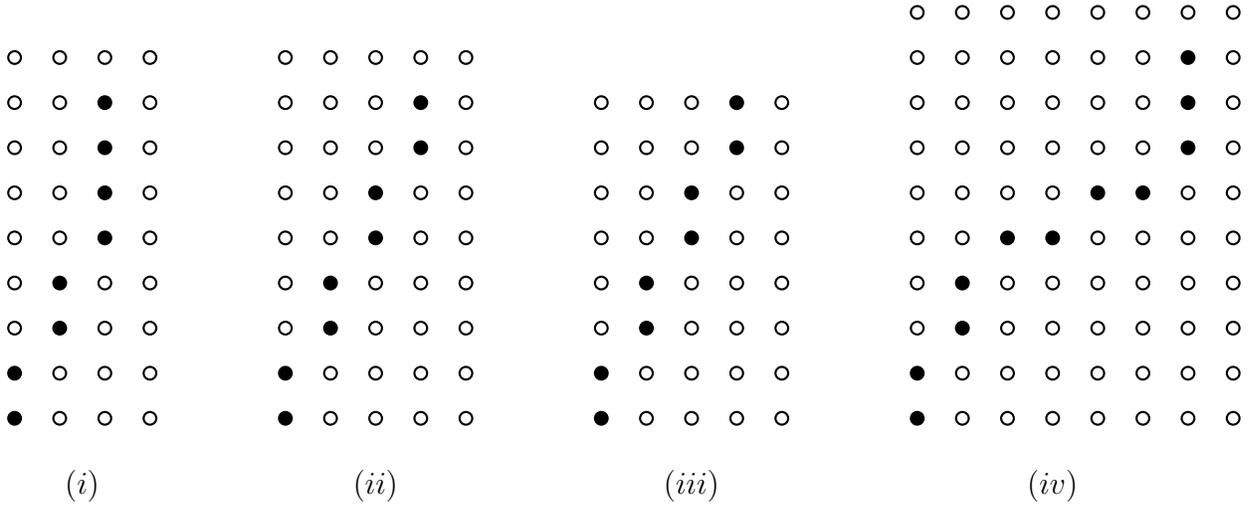

Next we show that $S$ is a metric generator for $K_r\times K_t$, which gives the upper bound for $dim(K_r\times K_t)$ in formula (\ref{formula}).
According to the fact that $d((g_i,h_j),(g_k,h_{\ell}))=1$ if $i\ne k$ and $j\ne \ell$, and that
$d((g_i,h_j),(g_k,h_{\ell}))=2$ if either $i=k$ or $j=\ell$, it is easy to check that the set $S$ given as described above is a
metric generator for $K_r\times K_t$. For instance, if
$(g_i,h_j),(g_k,h_{\ell})\in \{(g_x,h_y)\,:\,\left\lceil\frac{y}{2}\right\rceil<x \mbox{ or } x=r\}$ are different vertices
(when $a=0$ and $r<\left\lfloor\frac{t}{2}\right\rfloor+1$), then at least one of the vertices of the set
$$\{(g_i,h_{2i-1}), (g_i,h_{2i}), (g_k,h_{2k-1}), (g_k,h_{2k}), (g_{\left\lceil j/2\right\rceil},h_j), (g_{\left\lceil \ell/2\right\rceil},h_{\ell})\}\cup \{(g_{r-1},h_{\beta})\,:\,2r-1\le \beta\le t-1\}$$
belongs to $S$ and distinguishes the pair $(g_i,h_j),(g_k,h_{\ell})$. Also, when $a>0$, then
$S_1\neq \emptyset$ and every vertex $(g_x,h_y)$, $x\leq 3a$ or $y\leq 3a$ is distinguished from any other 
vertex of $V(G)\times V(H)$ by using a vertex of $S_1$. The remaining possibilities are clearly observed and are left to the reader.

We now show the lower bound for $dim(K_r\times K_t)$ of formula (\ref{formula}) by induction on $a$.
Let first $a=0$, which means that $r\le \left\lfloor\frac{t}{2}\right\rfloor+1$. Notice that for any two vertices 
$(g_i,h_j),(g_i,h_{\ell})$, with $j\ne \ell$,
it follows that $d((g_i,h_j),(g_k,h_q))=d((g_i,h_{\ell}),(g_k,h_q))$ whenever $q\notin \{j,\ell\}$. Thus, any
metric basis for $K_r\times K_t$ must have cardinality at least $t-1$ and so, $dim(K_r\times K_t)\ge t-1$.
Hence, if $r<\left\lfloor\frac{t}{2}\right\rfloor+1$ or $(r=\frac{t-1}{2}+1$ and $t$ is odd$)$, then $dim(K_r\times K_t)=t-1$.
Notice that the same argumentation with commutativity gives also that $dim(K_r\times K_t)\ge r-1$

We consider the case $t$ even and $r=\frac{t}{2}+1$. Suppose $dim(K_r\times K_t)=t-1$ and let $S$ be a metric 
basis of $K_r\times K_t$. Hence, there exists a vertex of $K_t$, say $h_1$, which is not a projection to $V(K_t)$ 
of any vertex of $S$. By the same argument, all \textcolor[rgb]{1.00,0.00,0.00}{vertices of $K_t$ but $h_1$} are
projections of exactly one vertex of $S$. On the other hand, if there exists two different vertices in $K_r$, 
say $g_1$ and $g_2$, which are projection of exactly one vertex of $S$, say $(g_1,h_i)$ and $(g_2,h_j)$, 
respectively, with $i\neq 1$, $j\neq 1$ and $i\neq j$, then $(g_2,h_i)$ and $(g_1,h_j)$ are not distinguish by 
$S$, which is not possible. So, there exists at most one vertex in $K_r$, say $g_1$, which is a projection 
of exactly one vertex, say $(g_1,h_j)$, of $S$. If there is one vertex in $K_r$, say $g_2$, which is not a 
projection of any vertex of $S$, then $S$ does not distinguish the vertices $(g_2,h_j)$ and $(g_1,h_1)$, 
which is a contradiction. As a consequence, every vertex of $K_r$ is a projection of a vertex in $S$ and 
all but possibly one vertex of $K_r$ are projections of at least two vertices of $S$, which means that 
$|S|\ge 2(r-1)+1=2r-1$. Since $r=\frac{t}{2}+1$, we obtain that $|S|\ge t+1$, a contradiction. Thus 
$dim(K_r\times K_t)=|S|\ge t$ and we are done for $a=0$.


Let now $a>0$, which means that $r>\left\lfloor\frac{t}{2}\right\rfloor+1$. By the induction hypothesis the formula (\ref{formula}) holds for $dim(K_{r-3}\times K_{t-3})$ and we have
\begin{equation*} 
dim(K_{r-3}\times K_{t-3})\geq \left\{\begin{array}{ll}
                                 (t-3)+(a-1)-1, & \mbox{if $r'<\left\lfloor\frac{t'}{2}\right\rfloor+1$ or $(r'=\frac{t'-1}{2}+1$ and $t'$ is odd$)$}, \\
                                 & \\
                                 (t-3)+(a-1), & \mbox{if $r'=\frac{t'}{2}+1$ and $t'$ is even}.
                               \end{array}
\right.
\end{equation*}
It is easy to see that these lower bounds above differ by exactly four to values of formula (\ref{formula}), since $(t-3)'=t-3-3(a-1)=t-3a=t'$ and similarly $(r-3)'=r'$.

Let $S$ be a metric basis of $K_r\times K_t$. Since $a>0$, we have $t\geq r\geq 3$ and $r>\left\lfloor\frac{t}{2}\right\rfloor+1$.
It is easy to see that $dim(K_3\times K_3)\geq 3$, by the same arguments as for $a=0$. Notice that for $t>r=3$ we have $a=0$ and
we can assume that $t\geq r\geq 4$. Similarly, $dim(K_4\times K_4)\geq 4$ and for $t>r=4$ we have again $a=0$ and
we can assume that $t\geq r\geq 5$. Now, suppose that \textcolor[rgb]{1.00,0.00,0.00}{to every vertex of $K_t$ project at most one vertex of $S$.} Since
$r>\left\lfloor\frac{t}{2}\right\rfloor+1$, there exist at least two vertices of $K_r$ to which no vertex of $S$ projects or
there are at least two vertices of $K_r$ to which exactly one vertex of $S$ projects. Both options are not possible for a metric basis
$S$ of $K_r\times K_t$ by using the same arguments as for $a=0$. Thus, there exists a vertex of $K_t$, say $h_i$, to which at least two vertices of
$S$ project. Moreover, there exists a vertex of $K_r$, say $g_1$, to which at least two vertices of $S$ project, otherwise we have a previous
case by $t\geq r$ or more than one vertex in $K_t$ to which no vertex of $S$ projects. We can choose notation so that
$(g_1,h_1)(g_1,h_2)\in S$.

If $i\notin \{1,2\}$, then we can assume that $i=3$ and that $(g_2,h_3)(g_3,h_3)\in S$. Notice that these four vertices are at distance one to every
vertex in $\{g_4,\ldots,g_r\}\times \{h_4,\ldots,h_t\}$, which induces a graph isomorphic to $K_{r-3}\times K_{t-3}$. Thus, we are done by the induction hypothesis.

Finally let $i\in \{1,2\}$, say $i=1$. We may assume that $(g_2,h_1)\in S$. Vertices of 
$\{g_3,\ldots,g_r\}\times \{h_3,\ldots,h_t\}$ induce a graph isomorphic to $K_{r-2}\times K_{t-2}$ 
and all such vertices are at distance one to $(g_1,h_1),(g_1,h_2),(g_2,h_1)$ (and also to $(g_2,h_2)$). 
Thus, $S\cap (\{g_3,\ldots,g_r\}\times \{h_3,\ldots,h_t\})$ is a metric generator for $K_{r-2}\times K_{t-2}$, 
or we can find a metric generator $S'$ for $K_{r-2}\times K_{t-2}$ of cardinality 
$|S-(S\cap \{(g_1,h_1),(g_1,h_2),(g_2,h_1),(g_2,h_2)\})|$, just by including in $S'$ a vertex 
$(g_k,h_l)\notin S$, with $k,l\notin \{1,2\}$, for each vertex $(g_1,h_l)$, $(g_2,h_l)$, $(g_k,h_1)$ 
or $(g_k,h_2)$ which belongs to $S$. Now, let $a'$ be the smallest nonnegative integer for which 
$(r-2)'\leq \left\lfloor\frac{(t-2)'}{2}\right\rfloor +1$ where $(r-2)'=r-2-3a'$ and $t'=t-2-3a'$. 
Recall that $r>\left\lfloor\frac{t}{2}\right\rfloor+1$.
Let first $r=\left\lfloor\frac{t}{2}\right\rfloor+2$. By an easy computation it follows that
$r-3=\left\lfloor\frac{t}{2}\right\rfloor-3+\textcolor[rgb]{1.00,0.00,0.00}{2}\leq \left\lfloor\frac{t-3}{2}\right\rfloor+1$ holds, 
which implies that $a=1$.  Similarly,
$r-2=\left\lfloor\frac{t-2}{2}\right\rfloor+1$ holds, which gives $a'=0$. Thus, we have that 
$|S|\ge 3+|S\cap \{g_3,\ldots,g_r\}\times \{h_3,\ldots,h_t\}|\ge 3+dim(K_{r-2}\times K_{t-2})$ 
or $|S|\ge 3+|S'|\ge 3+dim(K_{r-2}\times K_{t-2})$ and, by induction hypothesis we obtain that 
$dim(K_{r}\times K_{t})=|S|\geq 3+t-2+a'=t+1=t+a$ \textcolor[rgb]{1.00,0.00,0.00}{when $t$ is even and 
$dim(K_{r}\times K_{t})=|S|\geq 3+t-2+a'-1=t=t+a-1$ when $t$ is odd}.

Let now $r>\left\lfloor\frac{t}{2}\right\rfloor+2$. Again it is easy to see that
$r-2>\left\lfloor\frac{t-2}{2}\right\rfloor+1$ and therefore $a'>0$. Since $t-2\geq r-2\geq 3$, there again exist at least one vertex in
$\{g_3,\ldots,g_r\}$ or in $\{h_3,\ldots,h_t\}$, such that at least two vertices of $S$ project to him. By the commutativity we may assume
that this vertex is $g_3$ and that $(g_3,h_j),(g_3,h_k)\in S$. If $j$ and $k$ differ from one, then we can repeat the arguments for
vertices $(V(K_r)-\{g_1,g_2,g_3\})\times (V(K_t)-\{h_1,h_j,h_k\})$ which induce $K_{r-3}\times K_{t-3}$ and we are done by induction.
Suppose that one of $j$ and $k$, say $j$, equals to one. In this case $S$ has at least four vertices in $\{g_1,g_2,g_3\}\times \{h_1,h_2,h_3\}$,
which have no influence on vertices in $\{g_4,\ldots,g_r\}\times \{h_4,\ldots,h_t\}$ which induce $K_{r-3}\times K_{t-3}$. By the induction
hypothesis we get $|S|\geq 4+dim(K_{r-3}\times K_{t-3})$ and the lower bound follows in this final case.
\end{proof}

Since $C_3\cong K_3$, the theorem above already gives a first part of the answer to $dim(C_r\times K_t)$.
The next proposition completes this family of direct products.

\begin{proposition}\label{C-r-K-n}
For any integers $t\ge 3$ and $r\ge 4$,
$$dim(C_r\times K_t)=\left\lceil\frac{r}{3}\right\rceil(t-1).
$$
\end{proposition}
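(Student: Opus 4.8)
The plan is to reduce everything to the metric behaviour of the cycle factor by first pinning down the distances in $C_r\times K_t$ explicitly. Writing a vertex as $(g,h)$ with $g\in V(C_r)$, $h\in V(K_t)$, and grouping the vertices into \emph{fibers} $\{(g,h):h\in V(K_t)\}$, I would combine the distance expression (\ref{distance}) with the elementary facts that $d^e_{K_t}(h,h')\in\{0,2\}$ and $d^o_{K_t}(h,h')\in\{1,3\}$ to prove the following clean lemma: for $w=(g',h')$ and $u=(g,h)$ one has $d(w,u)=d_{C_r}(g',g)$ whenever $d_{C_r}(g',g)\ge 2$, while $d(w,u)\in\{1,3\}$ (equal to $3$ iff $h=h'$) when $d_{C_r}(g',g)=1$, and $d(w,u)\in\{0,2\}$ (equal to $0$ iff $h=h'$) when $g'=g$. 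The decisive consequence is that the coordinate $h$ is \emph{invisible} beyond cycle-distance one: two vertices can be separated through their $K_t$-coordinates only by a vertex lying in the same fiber or in an adjacent fiber.

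Using this lemma I would split the resolving requirement into two conditions on a candidate generator $S$, described through the sets $A_g=\{h:(g,h)\in S\}$. First, fixing a fiber $g$, the vertices $(g,h),(g,h')$ are separated only by a vertex whose $K_t$-coordinate lies in $\{h,h'\}$ and which sits in $N_{C_r}[g]$; hence $A_{g-1}\cup A_g\cup A_{g+1}$ must miss at most one element of $V(K_t)$, that is $|A_{g-1}\cup A_g\cup A_{g+1}|\ge t-1$ for every $g$. Summing this over all $g$, and noting that each $A_g$ is counted three times, yields $3|S|\ge r(t-1)$, i.e.\ the averaging bound $|S|\ge\lceil r(t-1)/3\rceil$; when $3\mid r$ this already matches $\lceil r/3\rceil(t-1)$. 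Second, a coordinate that is used in no fiber close to $g$ contributes no refinement, so the representation of such a \emph{free} vertex depends only on the cycle-distances to the occupied fibers; these free vertices must therefore be separated purely by the cycle geometry, which translates into a resolution condition on the multiset of occupied fibers.

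For the upper bound I would exhibit a generator of the claimed size by distributing $t-1$ coordinates over roughly every third fiber, but \emph{staggered} rather than concentrated, so that every window of three consecutive fibers already contains $t-1$ distinct coordinates (securing the first condition) while the single unused coordinate per fiber produces pairwise distinct cycle-distance profiles (securing the second). The staggering is essential: placing full fibers of coordinates at equally spaced anchors fails precisely because antipodal or distance-two fibers then receive identical profiles. I would then verify directly, through the distance lemma, that the staggered set separates every pair, treating same-fiber, adjacent-fiber and far pairs in turn, much in the spirit of the construction of Figure \ref{sets-basis}.

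The hard part will be matching, in the lower bound, the rounding $\lceil r/3\rceil(t-1)$, which the averaging bound alone does not deliver when $3\nmid r$. Here the second (cycle-resolution) condition must be fed back in: the extra vertices are forced because, once the first condition holds with equality, the free-coordinate vertices in metrically symmetric fibers can no longer all be separated. The most delicate situation is that of short cycles, where two fibers become metrically indistinguishable from one another by every outside vertex; in particular, fibers at distance $2$ are problematic because $2$ is exactly the value produced by a same-fiber coordinate mismatch. I expect the careful analysis of these near-antipodal symmetric configurations, and the verification that they force the claimed count rather than fewer or more vertices, to be the main obstacle of the proof, and I would isolate the smallest such cases for separate treatment.
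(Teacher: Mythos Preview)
Your distance lemma and the window condition (at most one $K_t$-coordinate missing from $A_{g-1}\cup A_g\cup A_{g+1}$, hence $|S_i|\ge t-1$ for every three-fiber window $S_i$) are exactly the paper's starting point, and your averaging bound $3|S|\ge r(t-1)$ is the paper's lower-bound computation verbatim.

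The upper bound is where you diverge, and here you are over-engineering. The paper does precisely what you warn against: it picks anchors $A=\{g_i:i\equiv 1\pmod 3\}$ and takes $B=A\times(V(K_t)\setminus\{h_1\})$, concentrating $t-1$ coordinates on each anchor fiber with no staggering at all. Your claim that this ``fails precisely because antipodal or distance-two fibers then receive identical profiles'' is true only for $r=6$, where $(g_3,h_1)$ and $(g_5,h_1)$ are indeed undistinguished by the anchors $\{g_1,g_4\}$; the paper singles out this one value of $r$ and writes down an explicit generator for $C_6\times K_t$ by hand. For every other $r$ the plain anchor construction already resolves all free-coordinate vertices via their cycle-distance profiles, so the staggering you propose buys nothing and only makes the verification harder.

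On the lower bound for $3\nmid r$, by contrast, you are more scrupulous than the paper. The paper closes the gap in one sentence, asserting that $|S|<\lceil r/3\rceil(t-1)$ forces some window $S_j$ to have fewer than $t-1$ vertices and hence an unresolved same-fiber pair; but this implication is not justified beyond the averaging already performed, which as you observe yields only $|S|\ge\lceil r(t-1)/3\rceil$. Your instinct to feed the second, cycle-resolution condition back into the argument at this point is well placed, and carrying that out rigorously is exactly the work the paper's proof leaves implicit.
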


\begin{proof}
Let $U=\{g_0,\ldots,g_{r-1}\}$ and $V=\{h_1,\ldots,h_t\}$ be the vertex sets of $C_r$ and $K_t$, respectively.
From now on, in this proof, all the operations with the subindexes of $g_i$ are done modulo $r$.  First, let $r\ne 6$. If $A=\{g_i:i\,\equiv\;1\;\mbox{ (mod }\,3)\}$, then the set $B=A\times(V-\{h_1\})$ is clearly a metric generator for $C_r\times K_t$. Therefore,
$dim(C_r\times K_t)\le \left\lceil\frac{r}{3}\right\rceil(t-1)$.

Let now $r=6$. Notice that the set $B$ defined as above is not a metric generator since, for instance, $(g_3,h_1)$ and $(g_5,h_1)$
are not distinguished by $B$. However, it is easy to observe that
$$W=\{(g_i,h_i), (g_{i+3},h_i):1\leq i\leq t-2\})\cup\{(g_{t-1},h_{t-1}),(g_{t+2},h_{t})\}$$
is a metric generator for $C_6\times K_t$. Therefore,
$$dim(C_6\times K_t)\le 2(t-2)+2=\left\lceil\frac{6}{3}\right\rceil(t-1).$$

On the other hand, notice that for every $i\in \{0,\ldots,r-1\}$, two different vertices $(g_i,h_j),(g_i,h_{\ell})$ are
only distinguished by themselves and by the vertices $(g_{i-1},h_j),(g_{i-1},h_{\ell}),(g_{i+1},h_j),(g_{i+1},h_{\ell})$.
Thus, if $S$ is a metric basis for $C_r\times K_t$ and  $S_i=S\cap (\{g_{i-1},g_i,g_{i+1}\}\times V)$
for every $i\in \{0,\ldots,r-1\}$, then it follows that $|S_i|\ge t-1$. We have that
$$dim(C_r\times K_t)=|S|=\frac{1}{3}\sum_{i=1}^{r}|S_i|\ge \frac{1}{3}\sum_{i=1}^{r}(t-1)=\frac{r}{3}(t-1),$$
which leads to $dim(C_r\times K_t)\ge \left\lceil\frac{r}{3}\right\rceil(t-1)$ and the equality follows for
$r\equiv\,0\,\mbox{ (mod }\,3)$.

Now, assume $r\equiv\,1\,\mbox{ (mod }\,3)$ or $r\equiv\,2\,\mbox{ (mod }\,3)$. If
$|S|<\left\lceil\frac{r}{3}\right\rceil(t-1)$, then there exists at least one vertex $g_j\in U$ such that
$|S_j|<t-1$. Thus, there are two vertices $(g_j,h_e),(g_j,h_f)$ which are not distinguished by $S$, a
contradiction. Thus $|S|\ge \left\lceil\frac{r}{3}\right\rceil(t-1)$ and we are done.
\end{proof}

A similar procedure as the one above, in the case $r\ne 6$, gives the following result. However, we include its
proof, since there are some different details in the process.

\begin{proposition}
For any integers $r,t\ge 3$,
$$dim(P_r\times K_t)=\left\lceil\frac{r}{3}\right\rceil(t-1).$$
\end{proposition}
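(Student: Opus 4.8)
The plan is to follow the template of Proposition~\ref{C-r-K-n}, adapting it to the presence of the two endpoints of $P_r$. Write $U=\{g_1,\dots,g_r\}$ and $V=\{h_1,\dots,h_t\}$ for the vertex sets of $P_r$ and $K_t$. The first step is to record the distances in $P_r\times K_t$, which are forced by formula~(\ref{distance}) together with the fact that $K_t$ (with $t\ge 3$) is nonbipartite of diameter one while $P_r$ is bipartite. A direct computation gives $d((g_i,h_j),(g_k,h_\ell))=|i-k|$ whenever $|i-k|\ge 2$ (independently of $j,\ell$); $d=0$ or $2$ when $i=k$, according to whether $j=\ell$ or $j\ne\ell$; and $d=1$ or $3$ when $|i-k|=1$, according to whether $j\ne\ell$ or $j=\ell$. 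The key consequence, exactly as for the cycle, is that two vertices $(g_i,h_j),(g_i,h_\ell)$ of a common fibre are distinguished only by vertices $(g_m,h_q)$ with $m\in\{i-1,i,i+1\}$ and $q\in\{j,\ell\}$.

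For the lower bound I would fix a metric basis $S$ and set, for each $i\in\{1,\dots,r\}$, $S_i=S\cap(\{g_{i-1},g_i,g_{i+1}\}\times V)$ (discarding indices outside $\{1,\dots,r\}$). The distinguishing fact above forces the set of $K_t$-coordinates occurring in $S_i$ to omit at most one element of $V$, whence $|S_i|\ge t-1$ for every $i$. Unlike the cycle, the windows $\{g_{i-1},g_i,g_{i+1}\}$ are not homogeneous, so instead of averaging I would select $\lceil r/3\rceil$ pairwise disjoint windows, using three-column interior windows together with a truncated two-column window at one or both ends as dictated by $r\bmod 3$. Since disjoint windows make the corresponding $S_i$ disjoint subsets of $S$, summing the inequalities $|S_i|\ge t-1$ over these $\lceil r/3\rceil$ blocks gives $|S|\ge\sum|S_i|\ge\lceil r/3\rceil(t-1)$.

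For the upper bound I would exhibit a generator of this size. Take a column set $A$ that is a minimum dominating set of $P_r$, concretely $A=\{g_i:i\equiv 1 \pmod 3\}$ when $r\not\equiv 0\pmod 3$ and the shifted set $A=\{g_i:i\equiv 2\pmod 3\}$ when $r\equiv 0\pmod 3$ (the shift is needed so that the far endpoint stays dominated), each of cardinality $\lceil r/3\rceil$; then put $B=A\times(V\setminus\{h_1\})$, so $|B|=\lceil r/3\rceil(t-1)$. Because $A$ dominates $P_r$, every fibre has a column of $A$ in its window carrying all of $V\setminus\{h_1\}$, which resolves all within-fibre pairs. It then remains to check that $B$ resolves the cross-fibre pairs $(g_i,h_j),(g_k,h_\ell)$ with $i\ne k$: if $A$ contains a column lying on the path to the left of both $g_i,g_k$ or to the right of both, the corresponding distances equal the path distances $|m-i|\ne|m-k|$ and already separate the two path coordinates, while the remaining pairs are handled by the local distances $1$ versus $3$ to a neighbouring column of $A$.

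The step I expect to be the main obstacle is precisely this cross-fibre verification near the two ends. On the cycle every vertex looks the same, but on the path the mirror pairs $(g_1,h_j),(g_3,h_j)$ (and symmetrically at the far end) are separated only by a column lying strictly beyond $g_3$; since $B$ deliberately omits the coordinate $h_1$, one must guarantee that such a column is present in $A$. This is exactly where the argument departs from the $r\ne 6$ cycle case, and it means the very short paths, where the two endpoints interact and no such far column exists, have to be inspected separately rather than by the uniform construction above. I would therefore isolate the small-$r$ cases at the outset and run the uniform lower- and upper-bound arguments for the range of $r$ in which each end of the path carries a dominating column with room to spare on its far side.
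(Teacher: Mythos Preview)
Your approach mirrors the paper's almost exactly: the same generating set $B=A\times(V\setminus\{h_1\})$ with the same choice of $A$, and the same fibre--window inequality $|S_i|\ge t-1$ for the lower bound. Your packing of $\lceil r/3\rceil$ pairwise disjoint windows is in fact a bit cleaner than the paper's treatment (which sums over the disjoint windows centred at $i\equiv 2\pmod 3$ when $3\mid r$ and then argues by contradiction in the other residues), but the two arguments are equivalent in substance.

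Where you go further than the paper is in flagging the endpoint issue. The paper simply asserts that $B$ is ``clearly'' a metric generator; you correctly note that mirror pairs $(g_1,h_j),(g_3,h_j)$ near an end are resolved only by a column strictly beyond $g_3$, and you propose to treat very short paths separately. That caution is well placed, but your plan cannot succeed as stated, because at $r=3$ the proposition itself fails: in $P_3$ the endpoints $g_1,g_3$ are false twins, so $N_{P_3\times K_t}(g_1,h_j)=\{g_2\}\times(V\setminus\{h_j\})=N_{P_3\times K_t}(g_3,h_j)$ for every $j$. These $t$ disjoint twin pairs force $\dim(P_3\times K_t)\ge t>t-1=\lceil 3/3\rceil(t-1)$, and indeed the proposed set $B=\{g_2\}\times(V\setminus\{h_1\})$ leaves $(g_1,h_1)$ and $(g_3,h_1)$ with identical metric representations. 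So ``isolating the small-$r$ cases'' will not rescue the argument at $r=3$; for $r\ge 4$ no twins arise and both your construction and your lower bound go through exactly as you describe.
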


\begin{proof}
Let $U=\{g_1,\ldots,g_{r}\}$ and $V=\{h_1,\ldots,h_t\}$ be the vertex sets of $P_r$ and $K_t$,
respectively. Similarly to the proof of Proposition \ref{C-r-K-n}, if
$A=\{g_i:i\,\equiv\;2\;\mbox{ (mod }\,3)\}$ when $r\equiv\,0\,\mbox{ (mod }\,3)$, or
$A=\{g_i:i\,\equiv\;1\;\mbox{ (mod }\,3)\}$ otherwise, then the set $B=A\times(V-\{h_1\})$ is
clearly a metric generator for $P_r\times K_t$. Therefore,
$dim(P_r\times K_t)\le \left\lceil\frac{r}{3}\right\rceil(t-1)$.

On the other hand, again as in the proof of Proposition \ref{C-r-K-n}, we notice that for every
$i\in \{2,\ldots,r-1\}$, two different vertices $(g_i,h_j),(g_i,h_{\ell})$ are only distinguished by
themselves and the vertices $(g_{i-1},h_j),(g_{i-1},h_{\ell}),(g_{i+1},h_j),(g_{i+1},h_{\ell})$. We first
consider $r\equiv\,0\,\mbox{ (mod }3)$. Let $S$ be a metric basis for $C_r\times K_t$ and let
$S_i=S\cap (\{g_{i-1},g_i,g_{i+1}\}\times V)$ for every $i\equiv\,2\,\mbox{ (mod }3)$. Hence, it
follows that $|S_i|\ge t-1$ and we have that
$$dim(C_r\times K_t)=|S|=\sum_{i=0}^{\left\lceil\frac{r}{3}\right\rceil-1}|S_{3i+2}|\ge
\sum_{i=0}^{\left\lceil\frac{r}{3}\right\rceil-1}(t-1)=\left\lceil\frac{r}{3}\right\rceil(t-1),$$
Thus, the equality follows for $r\equiv\,0\,\mbox{ (mod }\,3)$.

Now, if $r\equiv\,1\,\mbox{ (mod }\,3)$ or $r\equiv\,2\,\mbox{ (mod }\,3)$, then suppose that
$|S|<\left\lceil\frac{r}{3}\right\rceil(t-1)$. So, there exists at least one vertex $g_j\in U$
such that $|S_j|<t-1$. Thus, there are two vertices $(g_j,h_e),(g_j,h_f)$ which are not
distinguished by $S$, a contradiction. Therefore, $|S|=\left\lceil\frac{r}{3}\right\rceil(t-1)$,
which completes the proof.
\end{proof}

It is known from \cite{direct-cart-isom} that $G\Box H\cong G\times H$ if and only if
$G\cong H\cong C_{2k+1}$ for some positive integer $k$. The following formula
obtained, for the metric dimension of $C_r\Box C_t$, $r,t\ge 3$, is known from \cite{pelayo1}:
$$dim(C_r\Box C_t)=\left\{\begin{array}{ll}
                            3, & \mbox{if $r$ or $t$ is odd,} \\
                            4, & \mbox{otherwise.}
                          \end{array}
\right.$$
Thus, by using these two facts we obtain the next result.

\begin{corollary}
For any integer $k\ge 1$, $dim(C_{2k+1}\times C_{2k+1})=3$.
\end{corollary}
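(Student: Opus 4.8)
The plan is to exploit the two facts recalled immediately before the statement, since together they reduce the claim to an essentially one-line argument. The first observation I would make explicit is that the metric dimension $dim(\cdot)$ is a graph isomorphism invariant: if $G\cong G'$, then any metric basis of $G$ transports along the isomorphism to a metric basis of $G'$ of the same cardinality (distances, and hence metric representations, are preserved), so $dim(G)=dim(G')$. This is the only conceptual ingredient, and it is standard.

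Next I would apply the characterization from \cite{direct-cart-isom}, which asserts that $G\Box H\cong G\times H$ precisely when $G\cong H\cong C_{2k+1}$. Specializing to $G=H=C_{2k+1}$ gives the isomorphism $C_{2k+1}\times C_{2k+1}\cong C_{2k+1}\Box C_{2k+1}$, and by the invariance just noted this yields
$$
dim(C_{2k+1}\times C_{2k+1})=dim(C_{2k+1}\Box C_{2k+1}).
$$

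Finally I would invoke the formula from \cite{pelayo1} for $dim(C_r\Box C_t)$, evaluated at $r=t=2k+1$. Since $2k+1$ is odd, the relevant branch of that formula returns the value $3$, whence $dim(C_{2k+1}\Box C_{2k+1})=3$. Combining this with the displayed equality completes the proof.

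There is, honestly, no serious obstacle here: the corollary is a direct composition of two externally cited results, and the entire content of the argument is the isomorphism-invariance of metric dimension together with the correct specialization of parameters ($r=t=2k+1$ odd). The only point requiring the slightest care is to confirm that the cited isomorphism applies in the diagonal case $G=H$ and that the odd value $2k+1\ge 3$ lands in the ``$r$ or $t$ is odd'' case of the $C_r\Box C_t$ formula rather than the even exceptional case; both are immediate.
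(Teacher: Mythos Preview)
Your proposal is correct and follows exactly the route the paper takes: the corollary is stated there as an immediate consequence of the isomorphism $C_{2k+1}\Box C_{2k+1}\cong C_{2k+1}\times C_{2k+1}$ from \cite{direct-cart-isom} together with the formula $dim(C_r\Box C_t)=3$ for $r$ or $t$ odd from \cite{pelayo1}. The only addition you make is spelling out the (standard) isomorphism invariance of $dim$, which the paper leaves implicit.
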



\section{Strong metric dimension}

We divide this section into two subsections. The first one deals with the strong metric dimension of the direct
product of any graph $G$ with a complete graph $K_n$, and in the second one we analyze some cases
of direct products between two graphs of diameter two. For this, note that by $G\cup H$ we mean a graph
with $V(G\cup H)=V(G)\cup V(H)$ and $E(G\cup H)=E(G)\cup E(H)$. It is easy to see that twins are
mutually maximally distant vertices with distance two between them.

\subsection{$H\cong K_n$}

We start to describe the structure of the strong resolving graph of $G\times K_n$ in order to use
Theorem \ref{lem_oellerman}. A graph $G$ is 2-\emph{mutually maximally distant free} or 2MMF for short,
if there exists no pair of mutually maximally distant vertices $u$ and $v$ with $d_G(u,v)=2$.
Clearly diameter two graphs are not 2MMF graphs.

\begin{theorem}\label{direct-complete}
Let $G$ be a connected 2MMF graph of order at least three and let the integer $n\ge 3$. If
$W$ is the subset of $V(G)$ which contains all vertices belonging to a triangle in $G$, then
$$(G\times K_n)_{SR}\cong (G\Box N_n)\cup (G_{SR}\circ N_n)\cup (W\Box K_n).$$
\end{theorem}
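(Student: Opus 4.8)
The plan is to compute the boundary of $G \times K_n$, determine all pairs of mutually maximally distant (MMD) vertices, and show that the resulting adjacency structure on those pairs decomposes into the three claimed pieces. Since by Theorem~\ref{lem_oellerman} the strong metric dimension is governed by $(G\times K_n)_{SR}$, and the vertex set of this graph is $\partial(G\times K_n)$ with edges recording MMD-ness, everything reduces to a careful distance analysis in $G\times K_n$. The key distance facts I would first establish are the following: because $K_n$ has diameter $1$ and contains odd closed walks at every vertex (loops in the walk sense via triangles, as $n\ge 3$), the even/odd walk distances $d^e_{K_n}, d^o_{K_n}$ between any two vertices are both finite and small, so formula~(\ref{distance}) lets me express $d_{G\times K_n}((g,h),(g',h'))$ cleanly in terms of $d_G(g,g')$ together with the parity/same-vertex status of $h,h'$. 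I expect two regimes: when $h=h'$, the second coordinate forces walks of a controlled parity in $G$; when $h\ne h'$, the freedom in $K_n$ relaxes the parity constraint. Getting these distance formulas exactly right is the technical backbone.

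\textbf{Identifying the three summands.} With the distance formulas in hand, I would characterize when $(g,h)$ and $(g',h')$ are MMD by checking the maximally-distant condition against the neighborhood~(\ref{neigh}), $N_{G\times K_n}(g,h)=N_G(g)\times N_{K_n}(h)=N_G(g)\times(V(K_n)\setminus\{h\})$. I anticipate three genuinely different sources of MMD pairs. First, a pair with $g=g'$ but $h\ne h'$: these are ``twins'' lying in a common fiber, and the earlier remark that twins are MMD at distance two suggests such a pair is MMD precisely when $g$ lies in a triangle of $G$ (so that the fiber over $g$ behaves like a copy of $K_n$ at distance two); this yields the component $W\Box K_n$, where $W$ collects triangle vertices. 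Second, a pair with $h=h'$ and $g\ne g'$ where $g,g'$ are MMD in $G$: here the $K_n$-coordinate is fixed, and I would show MMD in the product corresponds exactly to MMD in $G$, giving copies of $G_{SR}$ indexed over the fibers, i.e.\ $G_{SR}\circ N_n$ (the lexicographic-with-edgeless structure records that only equal second coordinates interact). Third, the ``mixed'' pairs that produce $G\Box N_n$: these come from adjacency in $G$ paired with fixed $K_n$-coordinate, reflecting the Cartesian-like skeleton. I would verify each claimed edge type is MMD and, crucially, that no other pairs are MMD, using the 2MMF hypothesis to rule out the spurious distance-two MMD pairs that would otherwise contaminate the $G$-factor.

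\textbf{The main obstacle} will be the exhaustive case analysis verifying that the MMD relation is \emph{exactly} the disjoint union of the three adjacency patterns, with no cross terms and no missing edges. In particular, I expect the delicate point to be pairs where both coordinates differ ($g\ne g'$ and $h\ne h'$): I must show such a pair is \emph{never} MMD, because the richer neighborhood in the product always supplies a strictly farther neighbor, so that these contribute nothing and the three components remain on disjoint vertex sets. The 2MMF assumption is precisely what prevents distance-two MMD pairs in $G$ from leaking into the analysis and merging the $G_{SR}\circ N_n$ piece with the $G\Box N_n$ piece; I would isolate a lemma stating that under 2MMF, $u,v$ MMD in $G$ forces $d_G(u,v)\ge 3$, and lean on it repeatedly. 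Finally I would confirm that the boundary $\partial(G\times K_n)$ equals the union of the vertex sets of the three pieces—every product vertex is maximally distant from something, so $\partial(G\times K_n)=V(G\times K_n)$—and that the edge sets are disjoint, so the union in the statement is genuinely a disjoint (graph) union on a shared vertex set, completing the isomorphism.
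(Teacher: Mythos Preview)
Your overall strategy---a case analysis on pairs $((g,h),(g',h'))$ according to which coordinates agree, checking the MMD condition via the neighborhood formula~(\ref{neigh})---is exactly what the paper does. But you have misread one of the three summands, and this produces a false claim that would wreck the case analysis. You describe $G_{SR}\circ N_n$ as ``copies of $G_{SR}$ indexed over the fibers,'' with the parenthetical that ``the lexicographic-with-edgeless structure records that only equal second coordinates interact.'' That is the description of $G_{SR}\Box N_n$, not of the lexicographic product. In $G_{SR}\circ N_n$, the pair $(g,h)(g',h')$ is an edge whenever $gg'\in E(G_{SR})$, with \emph{no} constraint on $h,h'$ (the second clause of the lexicographic definition fires regardless of the $N_n$-coordinates). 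Consequently your claim that ``pairs where both coordinates differ are never MMD'' is false: when $g,g'$ are non-adjacent and MMD in $G$ (hence at distance $\ge 3$ by 2MMF), the vertices $(g,h)$ and $(g',h')$ are MMD in $G\times K_n$ for \emph{every} choice of $h,h'$. This is the paper's Case~2, and it is precisely what generates the genuinely lexicographic piece $G_{SR}\circ N_n$.

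With that corrected, the tripartition you should be verifying is: (i) $h=h'$ and $g\sim g'$, giving $G\Box N_n$; (ii) $g\not\sim g'$ and $g,g'$ MMD in $G$, for arbitrary $h,h'$, giving $G_{SR}\circ N_n$; (iii) $g=g'\in W$ and $h\ne h'$, giving $W\Box K_n$. The pairs to rule out are then $g\sim g'$ with $h\ne h'$ (a third $h_3$ yields a neighbor $(g,h_3)$ of $(g',h')$ at distance~$2$ from $(g,h)$, since $n\ge 3$), $g\not\sim g'$ and not MMD in $G$ (a farther neighbor in $G$ lifts to the product), and $g=g'\notin W$ (a neighbor $g_3$ of $g$ gives $(g_3,h)$ at distance~$3$ from $(g,h)$ because $g$ is not on a triangle). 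Your instinct about the role of 2MMF is right---it guarantees $d_G(g,g')\ge 3$ in (ii), which is what makes the product distance equal $d_G(g,g')$ independently of the $K_n$-coordinates---but note it does not by itself force $d_G(u,v)\ge 3$ for all MMD pairs; you need $g\not\sim g'$ as well.
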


\begin{proof}
Let $(g_1,h_1),(g_2,h_2)$ be two different vertices of $G\times K_n$. We first consider $G$
is a triangle free graph and analyze the following possible situations.\\

\noindent Case 1: $g_1\ne g_2$, $h_1=h_2$ and $g_1\sim g_2$. Hence
$d_{G\times K_n}((g_1,h_1),(g_2,h_1))=3$, since $G$ is triangle free. Also, we observe that
$N_{G\times K_n}(g_1,h_1)=N_G(g_1)\times (V(K_n)-\{h_1\})$ and for every vertex $g\in N_G(g_1)$
and every $h\in V(K_n)-\{h_1\}$ it follows, $d_{G\times K_n}((g_2,h_1),(g,h))=2$. Similarly,
$N_{G\times K_n}(g_2,h_1)=N_G(g_2)\times (V(K_n)-\{h_1\})$ and for every vertex $g\in N_G(g_2)$
and every $h\in V(K_n)-\{h_1\}$ it follows, $d_{G\times K_n}((g_1,h_1),(g,h))=2$. Thus,
$(g_1,h_1)$ and $(g_2,h_2)$ are mutually maximally distant in $G\times K_n$.\\

As a consequence of the Case 1 above, for any vertex $h\in V(K_n)$ and any two adjacent vertices
$g,g'$ of $G$, it follows that $(g,h)$ and $(g',h)$ are mutually maximally distant in $G\times K_n$.
Therefore, the strong resolving graph $(G\times K_n)_{SR}$ contains $n$ copies of $G$ as subgraphs,
or equivalently the graph $G\Box N_n$. We continue describing the other part of $(G\times K_n)_{SR}$.\\

\noindent Case 2: $g_1\ne g_2$, $g_1\not\sim g_2$ and $g_1,g_2$ are mutually maximally distant in $G$.
Hence, it follows by (\ref{distance}) that $d_{G\times K_n}((g_1,h_1),(g_2,h_2))=d_G(g_1,g_2)\geq 3$, since
$G$ is 2MMF graph. It is straightforward
to observe that $(g_1,h_1)$ and $(g_2,h_2)$ are mutually maximally distant in $G\times K_n$. \\

As a consequence of the Case 2 above, for any vertices $h,h'\in V(K_n)$ and any two mutually maximally
distant vertices $g,g'$ of $G$, it follows that $(g,h)$ and $(g',h')$ are mutually maximally distant
in $G\times K_n$. Therefore, the strong resolving graph $(G\times K_n)_{SR}$ contains a subgraph
isomorphic to the lexicographic product of $G_{SR}$ and $N_n$. We now shall show that $(G\times K_n)_{SR}$
has no more edges than those ones described until now, which leads to
$(G\times K_n)_{SR}\cong (G\Box N_n)\cup (G_{SR}\circ N_n)$.\\

\noindent Case 3: $g_1\ne g_2$, $g_1\not\sim g_2$ and $g_1,g_2$ are not mutually maximally distant
in $G$. Similarly to Case 2, it clearly follows that $(g_1,h_1)$ and $(g_2,h_2)$ are not mutually
maximally distant in $G\times K_n$, since for a neighbor $g_3$ of $g_2$ with $d_{G}(g_1,g_3)>d_{G}(g_1,g_2)$
we obtain $d_{G\times K_n}((g_1,h_1),(g_2,h_2))<d_{G\times K_n}((g_1,h_1),(g_3,h))$ for any $h\ne h_2$.\\

\noindent Case 4: $g_1\ne g_2$, $h_1\ne h_2$ and $g_1\sim g_2$. Hence
$d_{G\times K_n}((g_1,h_1),(g_2,h_2))=1$. Since $n\ge 3$, for any vertex $h_3\notin \{h_1,h_2\}$ we have
that $(g_1,h_3)\in N_{G\times K_n}(g_2,h_2)$ and $d_{G\times K_n}((g_1,h_1),(g_1,h_3))=2$. Thus,
$(g_1,h_1)$ and $(g_2,h_2)$ are not mutually maximally distant in $G\times K_n$.\\

\noindent Case 5: $g_1=g_2$. Hence, $d_{G\times K_n}((g_1,h_1),(g_1,h_2))=2$. Since $G$ has order
greater than one, there exists a vertex $g_3\in N_G(g_1)$ and we observe that the vertex
$(g_3,h_1)\in N_{G\times K_n}(g_1,h_2)$. Also, as $G$ is triangle free,
$d_{G\times K_n}((g_1,h_1),(g_3,h_1))=3$. Thus, $(g_1,h_1)$ and $(g_2,h_2)$ are not mutually
maximally distant in $G\times K_n$.\\

So, if $G$ is triangle free, then we have that $(G\times K_n)_{SR}\cong (G\Box N_n)\cup (G_{SR}\circ N_n)$.
We consider now that $W$ is the set of vertices of $G$ belonging to a triangle and $|W|=t$. We notice
that the fact that there exist vertices belonging to a triangle in $G$ only affects the Case 5. (Actually it
also affect Case 1, but there are no changes in conclusions.) That
is, if $g_1=g_2$ and $g_1\in W$, then as above $d_{G\times K_n}((g_1,h_1),(g_1,h_2))=2$. However,
we have that $N_{G\times K_n}(g_1,h_1)=N_G(g_1)\times (V(K_n)-\{h_1\})$ and for every vertex
$g\in N_G(g_1)$ and every $h\in V(K_n)-\{h_1\}$ it follows, $d_{G\times K_n}((g_2,h_2),(g,h))\le 2$.
Similarly, $N_{G\times K_n}(g_2,h_2)=N_G(g_2)\times (V(K_n)-\{h_2\})$ and for every vertex $g\in N_G(g_2)$
and every $h\in V(K_n)-\{h_2\}$ it follows, $d_{G\times K_n}((g_1,h_1),(g,h))\le 2$.  Thus, $(g_1,h_1)$ and
$(g_1,h_2)$ are mutually maximally distant in $G\times K_n$.\\

As a consequence, given a vertex $g\in W$, for any two vertices $h,h'\in V(K_n)$ it follows that $(g,h)$
and $(g,h')$ are mutually maximally distant in $G\times K_n$. Therefore, the strong resolving graph
$(G\times K_n)_{SR}$ contains $t=|W|$ copies of the graph $K_n$, or equivalently the Cartesian product
of $W$ and $K_n$ and the proof is completed.
\end{proof}

Now, according to Theorems \ref{lem_oellerman} and \ref{direct-complete}, the study of the strong metric dimension
of $G\times K_n$ is reduced to study the vertex cover number of $(G\Box N_n)\cup (G_{SR}\circ N_n)\cup (W\Box K_n)$,
where $W$ contains all vertices of $G$ which are on some triangle.
Next we show that this can be even more reduced for the case of triangle free graphs, which are
one extreme with respect to $W$.

\begin{lemma}\label{lem-GSR-trian-free}
For any triangle free 2MMF graph $G$ of order at least three and any integer $n\ge 3$,
$$\beta((G\Box N_n)\cup (G_{SR}\circ N_n))=n\cdot \beta(G\cup G_{SR}).$$
\end{lemma}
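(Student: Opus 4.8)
The plan is to first unpack the explicit structure of the graph $\Gamma := (G\Box N_n)\cup (G_{SR}\circ N_n)$, and then bound its vertex cover number from both sides. Since $N_n$ is edgeless and $V(G_{SR})=\partial(G)\subseteq V(G)$, the combined vertex set is $V(G)\times\{1,\dots,n\}$. The Cartesian factor $G\Box N_n$ contributes only \emph{within-layer} edges $(g,i)(g',i)$ with $gg'\in E(G)$, while the lexicographic factor $G_{SR}\circ N_n$ contributes edges $(g,i)(g',i')$ for \emph{all} choices of $i,i'$ whenever $gg'\in E(G_{SR})$. The key structural observation is therefore that the subgraph induced on each layer $V(G)\times\{i\}$ is an isomorphic copy of $G\cup G_{SR}$, while every edge joining two distinct layers is an SR-edge, i.e.\ arises from a pair $gg'\in E(G_{SR})$ realized across all layer choices.

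For the upper bound I would let $C$ be a minimum vertex cover of $G\cup G_{SR}$, so $|C|=\beta(G\cup G_{SR})$, and check that the ``column set'' $C\times\{1,\dots,n\}$ covers $\Gamma$. Indeed, any within-layer edge comes from an edge of $G$ or of $G_{SR}$, hence of $G\cup G_{SR}$, so one of its endpoints projects into $C$; and any between-layer edge $(g,i)(g',i')$ has $gg'\in E(G_{SR})\subseteq E(G\cup G_{SR})$, so again one of $g,g'$ lies in $C$ and the corresponding vertex lies in $C\times\{1,\dots,n\}$. This yields $\beta(\Gamma)\le n\,\beta(G\cup G_{SR})$.

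For the matching lower bound I would take an arbitrary vertex cover $D$ of $\Gamma$ and restrict it layer by layer, setting $D_i=\{g\in V(G):(g,i)\in D\}$. Because both endpoints of a within-layer edge lie in the same layer, $D$ must cover each layer using only vertices of that layer; hence $D_i$ is a vertex cover of the layer-$i$ copy of $G\cup G_{SR}$, so $|D_i|\ge\beta(G\cup G_{SR})$. Summing over the $n$ pairwise disjoint layers gives $|D|=\sum_{i=1}^{n}|D_i|\ge n\,\beta(G\cup G_{SR})$, and minimizing over $D$ yields $\beta(\Gamma)\ge n\,\beta(G\cup G_{SR})$. Combining the two inequalities gives the claimed equality.

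The argument is short once the structure is identified, so there is no deep obstacle; the one point that needs care is the upper bound, where the between-layer SR-edges must also be covered. What makes the column construction succeed is precisely that these SR-edges are present for \emph{every} pair of layers, so selecting a whole column $\{g\}\times\{1,\dots,n\}$ whenever $g\in C$ simultaneously handles all SR-edges at $g$ across all layers. The lower bound is then tight because the $n$ layers impose their $\beta(G\cup G_{SR})$ requirements independently, the between-layer edges being irrelevant to that count.
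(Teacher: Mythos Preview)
Your proof is correct and follows essentially the same approach as the paper: the lower bound comes from the $n$ disjoint within-layer copies of $G\cup G_{SR}$, and the upper bound from lifting a minimum vertex cover of $G\cup G_{SR}$ to all layers. Your upper-bound construction is in fact slightly cleaner than the paper's, since by using the \emph{same} cover $C$ in every layer you make the coverage of between-layer SR-edges immediate, whereas the paper's phrasing (allowing potentially different $A_i$ in each layer) requires that same choice implicitly for the argument to go through.
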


\begin{proof}
Since $(G\cup G_{SR})\Box N_n$ is a subgraph of $(G\Box N_n)\cup (G_{SR}\circ N_n)$, it is clear that
$n\cdot\beta(G\cup G_{SR})=\beta((G\cup G_{SR})\Box N_n)\le \beta((G\Box N_n)\cup (G_{SR}\circ N_n))$.
On the other hand, let $A_i$, $i\in \{1,\ldots,n\}$, be a vertex cover set of minimum cardinality in the
$i^{th}$ copy of $G\cup G_{SR}$ in $(G\cup G_{SR})\Box N_n$. Let $e=xy$ be an edge from
$(G\Box N_n)\cup (G_{SR}\circ N_n)$ between any two
vertices belonging to two different copies, say the $i^{th}$ and the $j^{th}$ copies, of $G\cup G_{SR}$
in $(G\cup G_{SR})\Box N_n$. Thus, the vertices $x$ and $y$ are mutually maximally distant in $G$, which
means that $x\in A_i$ or $y\in A_j$. Thus, every edge $e$ of $(G\Box N_n)\cup (G_{SR}\circ N_n)$ is
covered by $\bigcup_{i=1}^n A_i$ and, as a consequence, we obtain that
$\beta((G\Box N_n)\cup (G_{SR}\circ N_n))\le \sum_{i=1}^n|A_i|= \sum_{i=1}^n \beta(G\cup G_{SR})=n\cdot \beta(G\cup G_{SR})$,
which completes the proof.
\end{proof}

The following result follows directly from Theorems \ref{lem_oellerman} and \ref{direct-complete},
and from Lemma \ref{lem-GSR-trian-free}.

\begin{theorem}\label{th-dim_s-G-K_n}
Let $G$ be a connected 2MMF graph of order at least three and let $n\ge 3$ be an integer. If $W$
contains all vertices of $G$ belonging to a triangle in $G$, then
$$dim_s(G\times K_n)= \beta((G\Box N_n)\cup (G_{SR}\circ N_n)\cup (W\Box K_n)).$$
Moreover, if $G$ is triangle free, then $dim_s(G\times K_n)=n\cdot \beta(G\cup G_{SR})$,
and if every vertex of $G$ is in a triangle, then $dim_s(G\times K_n)= \beta((G\Box K_n)\cup (G_{SR}\circ N_n))$.
\end{theorem}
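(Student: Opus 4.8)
The plan is to read off all three statements from Theorems~\ref{lem_oellerman} and~\ref{direct-complete} together with Lemma~\ref{lem-GSR-trian-free}, the only non-citation step being a short edge-set comparison in the two extreme cases. First I would note that $G\times K_n$ is connected: since $K_n$ is non-bipartite for $n\ge 3$ and $G$ is connected, the two factors are not both bipartite, so the product is connected and Theorem~\ref{lem_oellerman} applies, giving $dim_s(G\times K_n)=\beta((G\times K_n)_{SR})$. The hypotheses of Theorem~\ref{direct-complete} are exactly those assumed here ($G$ connected and 2MMF of order at least three, $n\ge 3$), so it yields the isomorphism $(G\times K_n)_{SR}\cong (G\Box N_n)\cup (G_{SR}\circ N_n)\cup (W\Box K_n)$. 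Because the vertex cover number is invariant under graph isomorphism, substituting this isomorphism into the previous equality produces the general formula with no further computation.

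For the triangle-free case I would observe that $W=\emptyset$, so the component $W\Box K_n$ is empty and the general formula collapses to $dim_s(G\times K_n)=\beta((G\Box N_n)\cup (G_{SR}\circ N_n))$. The graph $G$ now satisfies all the hypotheses of Lemma~\ref{lem-GSR-trian-free} (triangle free, 2MMF, order at least three, $n\ge 3$), which rewrites the right-hand side as $n\cdot\beta(G\cup G_{SR})$, and the second formula follows at once.

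For the case in which every vertex of $G$ lies in a triangle I would use $W=V(G)$. The heart of this case is the graph identity $(G\Box N_n)\cup (W\Box K_n)=G\Box K_n$: over the common vertex set $V(G)\times V(K_n)$, the edges of $G\Box N_n$ are exactly the pairs $\{(g,h),(g',h)\}$ with $gg'\in E(G)$ (since $N_n$ is edgeless, only $G$-layer edges survive), while the edges of $W\Box K_n$ are exactly the pairs $\{(g,h),(g,h')\}$ with $h\ne h'$ (since $W$ carries no edges, only $K_n$-fiber edges survive), and these two disjoint families together form precisely $E(G\Box K_n)$. Substituting this identity into the general formula gives $dim_s(G\times K_n)=\beta((G\Box K_n)\cup (G_{SR}\circ N_n))$, as claimed. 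The whole argument is essentially bookkeeping, and I expect the only point requiring genuine care to be this last identity — verifying that the two partial edge sets are disjoint and exhaust the Cartesian product over the shared vertex set — which is elementary, so no serious obstacle arises.
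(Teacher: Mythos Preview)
Your proposal is correct and follows exactly the paper's approach, which simply states that the result follows directly from Theorems~\ref{lem_oellerman} and~\ref{direct-complete} and Lemma~\ref{lem-GSR-trian-free}. You add two helpful details the paper leaves implicit---the connectedness of $G\times K_n$ and the edge-set identity $(G\Box N_n)\cup (V(G)\Box K_n)=G\Box K_n$---but the underlying argument is the same.
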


Trees are graphs without triangles. However they are not always 2MMF graphs. Given a tree $T$ of order at least
three, we denote by $T_{-\ell}$ the tree obtained from $T$ by deleting all its leaves. A vertex of $T$ is called
a \emph{support vertex}, if it is adjacent to a leaf. Clearly, any tree $T$ is a 2MMF graph if and only if every support vertex
is adjacent to exactly one leaf. Moreover, if there exists a $\beta(T_{-\ell})$-set that contains a support
vertex of $T$, then we call $T$ a \emph{good tree}.

\begin{proposition}\label{tree}
Let $T$ be a 2MMF tree with $\ell(T)$ leaves and $n\ge 3$ be an integer.
\begin{itemize}
\item If $T$ is a good tree, then $dim_s(T\times K_n)=n(\ell(T)-1+\beta(T_{-\ell}))$,
\item If $T$ is not a good tree, then $dim_s(T\times K_n)=n(\ell(T)+\beta(T_{-\ell}))$.
\end{itemize}
\end{proposition}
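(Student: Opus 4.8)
The plan is to reduce everything to the triangle-free case of Theorem \ref{th-dim_s-G-K_n}. Since a tree contains no triangles and $T$ is a connected 2MMF graph of order at least three, that theorem applies directly and gives $dim_s(T\times K_n)=n\cdot\beta(T\cup T_{SR})$. Thus the whole problem collapses to computing the single quantity $\beta(T\cup T_{SR})$, and the two formulas in the statement must come from the good/not-good dichotomy affecting this vertex cover number.

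First I would identify $T_{SR}$ explicitly. Recall that the boundary of a tree is exactly its leaf set. I claim any two distinct leaves of $T$ are mutually maximally distant: a leaf $u$ has a unique neighbor $w$, and in a tree $d_T(v,w)=d_T(v,u)-1<d_T(v,u)$ for every vertex $v$, so every leaf is maximally distant from every other vertex; applying this to both endpoints of a leaf pair shows they are mutually maximally distant. Hence $T_{SR}\cong K_{\ell(T)}$ on the leaf set, and $T\cup T_{SR}$ is the graph obtained from $T$ by turning its set $L$ of $\ell(T)$ leaves into a clique. I would also record here that, since every support vertex of a 2MMF tree of order at least three is adjacent to exactly one leaf and has degree at least two, every support vertex is a non-leaf and hence a vertex of $T_{-\ell}$; moreover the edges of $T_{-\ell}$ are precisely the edges of $T$ joining two non-leaves.

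Next I would compute $\beta(T\cup T_{SR})$ by a short lower/upper bound argument built around the clique on $L$. Any vertex cover $C$ must contain at least $\ell(T)-1$ leaves, since $L\setminus C$ is independent and $L$ is a clique. I would split into two cases. If $C$ contains all $\ell(T)$ leaves, then the non-leaf part $C\setminus L$ must cover every edge of $T_{-\ell}$ (both endpoints of such an edge are non-leaves), forcing $|C|\ge \ell(T)+\beta(T_{-\ell})$; this bound is attained by taking $L$ together with a minimum vertex cover of $T_{-\ell}$. If instead $C$ omits exactly one leaf $u_0$ with support $s_0$, then the leaf-edge $u_0s_0$ forces $s_0\in C$, so $C\setminus L$ is a vertex cover of $T_{-\ell}$ containing the support vertex $s_0$. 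The good/not-good condition enters exactly here: if $T$ is good, a minimum vertex cover of $T_{-\ell}$ already contains some support vertex $s_0$, and omitting its (unique) adjacent leaf $u_0$ yields a cover of size $(\ell(T)-1)+\beta(T_{-\ell})$; if $T$ is not good, every vertex cover of $T_{-\ell}$ containing a support vertex has size at least $\beta(T_{-\ell})+1$, so this case also gives $|C|\ge \ell(T)+\beta(T_{-\ell})$.

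Combining the two cases yields $\beta(T\cup T_{SR})=\ell(T)-1+\beta(T_{-\ell})$ when $T$ is good and $\beta(T\cup T_{SR})=\ell(T)+\beta(T_{-\ell})$ when $T$ is not good; multiplying by $n$ via Theorem \ref{th-dim_s-G-K_n} gives the two displayed formulas. The main obstacle I expect is the bookkeeping in the not-good case: one must be sure that omitting a leaf never saves a vertex, which relies on the precise definition of a good tree (no minimum $\beta(T_{-\ell})$-set meets the support vertices) together with the unique-leaf-per-support property of 2MMF trees, which makes the choice of the omitted leaf $u_0$ well defined and its support $s_0$ unambiguous. Everything else is routine once $T_{SR}\cong K_{\ell(T)}$ has been established.
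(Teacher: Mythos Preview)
Your proposal is correct and follows essentially the same approach as the paper: reduce to $\beta(T\cup T_{SR})$ via Theorem~\ref{th-dim_s-G-K_n}, identify $T_{SR}$ as the complete graph on the leaf set, and then compute the vertex cover number of $T\cup T_{SR}$ using the good/not-good dichotomy. Your case analysis for the lower bound (splitting on whether the cover contains all leaves or omits exactly one) is in fact more explicit than the paper's argument, but the underlying idea is identical.
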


\begin{proof}
Since $T$ is triangle free, $T\cup T_{SR}$ is isomorphic to a graph obtained from $T$ by adding all
possible edges between any two leaves. So, leaves of $T$ induces a complete graph in $T\cup T_{SR}$ and
every $\beta(T\cup T_{SR})$-set contains at least $n-1$ leaves. If $T$ is a good tree, then there
exists a $\beta(T_{-\ell})$-set $A$ that contains a support vertex $v$ of $T$. Let $u$ be a leave adjacent to
$v$. If $B$ is the set of all leaves of $T$, then $A\cup (B-\{u\})$ is clearly a $\beta(T\cup T_{SR})$-set and we have
$dim_s(T\times K_n)=n(\ell(T)-1+\beta(T_{-\ell}))$ by Theorem \ref{th-dim_s-G-K_n}.

If $T$ is not a good tree, then there is no support vertex of $T$ in every $\beta(T_{-\ell})$-set. In this case we need
at least $\ell(T)$ additional vertices for a vertex covering of $T\cup T_{SR}$. Since all leaves together with
a $\beta(T_{-\ell})$-set form a vertex covering set of $T\cup T_{SR}$, the second equality follows
by Theorem \ref{th-dim_s-G-K_n} again.
\end{proof}

In the next result a subdivided star represents a tree obtained from a star by subdividing once each of its edges.

\begin{corollary}
Let $T$ be a tree of order $n_1\geq 4$ with $\ell(T)$ leaves and let $n\ge 3$ be an integer.
\begin{itemize}
\item If $T$ is a path $P_{n_1}$, then $dim_s(P_{n_1}\times K_n)=n\left\lceil\frac{n_1}{2}\right\rceil$.
\item If $T$ is a subdivided star, then $dim_s(T\times K_n)=n(\frac{n_1+1}{2})$.
\end{itemize}
\end{corollary}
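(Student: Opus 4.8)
The plan is to apply Proposition~\ref{tree} to the two families of trees in question, so the main task is to compute the two quantities $\ell(T)$ and $\beta(T_{-\ell})$ for each family and to verify the ``good tree'' condition. First I would observe that both $P_{n_1}$ (for $n_1\ge 4$) and a subdivided star are 2MMF trees: in a path each of the two support vertices (the second and second-to-last vertices) is adjacent to exactly one leaf, and in a subdivided star every support vertex is the unique degree-two vertex incident to a given leaf, so again each support vertex has exactly one leaf. This lets me invoke Proposition~\ref{tree} in both cases.

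For the path $P_{n_1}$, I would note $\ell(P_{n_1})=2$, and that $(P_{n_1})_{-\ell}=P_{n_1-2}$, so $\beta(P_{n_1-2})=\lfloor (n_1-2)/2\rfloor=\lceil n_1/2\rceil-1$. The next step is to check that $P_{n_1}$ is a good tree: a minimum vertex cover of $P_{n_1-2}$ can be chosen to include an endpoint of that path, and the endpoints of $P_{n_1-2}$ are precisely the support vertices of $P_{n_1}$, so there is a $\beta((P_{n_1})_{-\ell})$-set containing a support vertex. Hence by the good-tree case, $dim_s(P_{n_1}\times K_n)=n(\ell(T)-1+\beta(T_{-\ell}))=n\bigl(2-1+\lceil n_1/2\rceil-1\bigr)=n\lceil n_1/2\rceil$, as claimed.

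For the subdivided star on $n_1$ vertices, I would first record the structure: if the underlying star has $k$ edges then subdividing gives a central vertex, $k$ degree-two ``middle'' vertices (the support vertices), and $k$ leaves, so $n_1=2k+1$ and $\ell(T)=k=\frac{n_1-1}{2}$. Deleting the leaves yields $T_{-\ell}$, which is itself a star $K_{1,k}$ with center the original center and the $k$ support vertices as its leaves; hence $\beta(T_{-\ell})=\beta(K_{1,k})=1$ (the center alone covers every edge). The key verification is again the good-tree condition: here the unique minimum vertex cover of $T_{-\ell}=K_{1,k}$ is the center of the star, which is \emph{not} a support vertex of $T$, so no $\beta(T_{-\ell})$-set contains a support vertex and $T$ is \emph{not} a good tree. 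Applying the not-good-tree case of Proposition~\ref{tree} gives $dim_s(T\times K_n)=n(\ell(T)+\beta(T_{-\ell}))=n\bigl(\frac{n_1-1}{2}+1\bigr)=n\cdot\frac{n_1+1}{2}$, as desired.

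The computations themselves are elementary; the step that requires genuine care -- and the one I expect to be the main pitfall -- is the good-tree determination, since the two families land on opposite sides of that dichotomy and the final formulas differ precisely in the $-1$ coming from it. In particular one must be careful that for the subdivided star the minimum vertex cover of $T_{-\ell}$ is forced to be the center (for $k\ge 2$ it is the unique $\beta$-set), so one cannot avoid the support-vertex obstruction; this is exactly what drives the $\frac{n_1+1}{2}$ rather than $\frac{n_1-1}{2}$ in the count. I would also note the small-case boundary: for $k=1$ the subdivided star degenerates to $P_3$, which is excluded by $n_1\ge 4$, so no conflict arises with the path formula.
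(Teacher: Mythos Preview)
Your treatment of the subdivided star is correct and matches the paper. The path case, however, contains two compensating errors that leave the final formula right but the argument wrong.

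First, the identity $\lfloor (n_1-2)/2\rfloor=\lceil n_1/2\rceil-1$ fails when $n_1$ is odd: the correct statement is $\lfloor (n_1-2)/2\rfloor=\lfloor n_1/2\rfloor-1$, which for odd $n_1$ equals $\lceil n_1/2\rceil-2$. Second, your claim that ``a minimum vertex cover of $P_{n_1-2}$ can be chosen to include an endpoint'' is false precisely when $n_1-2$ is odd. For $P_m$ with $m$ odd there are $m-1$ edges and $\beta(P_m)=(m-1)/2$, so every vertex in a minimum cover must cover two edges, hence cannot be an endpoint; the unique $\beta$-set is $\{v_2,v_4,\ldots,v_{m-1}\}$. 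Thus for odd $n_1$ the path $P_{n_1}$ is \emph{not} a good tree, and the not-good-tree formula of Proposition~\ref{tree} applies instead:
\[
dim_s(P_{n_1}\times K_n)=n\bigl(\ell(T)+\beta(T_{-\ell})\bigr)=n\Bigl(2+\tfrac{n_1-3}{2}\Bigr)=n\cdot\tfrac{n_1+1}{2}=n\left\lceil\tfrac{n_1}{2}\right\rceil.
\]
Your two errors (an extra $+1$ in $\beta$ and an extra $-1$ from the wrong good-tree branch) cancel. The paper's proof makes exactly this parity distinction: it records that a path on an even number of vertices is a good tree while a path on an odd number of vertices is not, and applies the two branches of Proposition~\ref{tree} accordingly. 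You should split the path case by the parity of $n_1$.
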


\begin{proof}
The results follow directly from Proposition \ref{tree} and the fact that a path on odd vertices and a subdivided star are not good trees, while a path on even vertices is a good tree.
\end{proof}

Note that a result above regarding a path $P_{n_1}$ was already presented in \cite{str-dim-cart-dir}. We next deal with the direct product of a complete graph and a complete bipartite graph. In contrast with Theorem \ref{direct-complete}, in this case all the mutually maximally distant vertices of the complete bipartite graph are at distance two.

\begin{theorem}\label{lem-K_n-K_r-t}
For any $r,t\ge 1$ and any $n\ge 3$, $$(K_{r,t}\times K_n)_{SR}\cong \bigcup_{i=1}^n K_{r+t}.$$
\end{theorem}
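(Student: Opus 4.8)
The plan is to determine directly which pairs of vertices of $K_{r,t}\times K_n$ are mutually maximally distant, and to show that these are exactly the pairs lying in a common ``layer'' $V(K_{r,t})\times\{h\}$. Write $A=\{a_1,\dots,a_r\}$ and $B=\{b_1,\dots,b_t\}$ for the two parts of $K_{r,t}$ and $V(K_n)=\{h_1,\dots,h_n\}$. Since $K_{r,t}$ is bipartite while $K_n$ is not (because $n\ge 3$), the product is connected, so all distances are finite and formula (\ref{distance}) applies throughout.

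First I would compute all distances via (\ref{distance}). In $K_{r,t}$ the relevant even/odd walk lengths are $d^e=2$ and $d^o=\infty$ between two distinct vertices of the same part, and $d^o=1$ and $d^e=\infty$ between vertices of opposite parts; in $K_n$ one has $d^e(h,h')=2$ and $d^o(h,h')=1$ for $h\ne h'$, and crucially $d^o(h,h)=3$ (a triangle, using $n\ge 3$). Substituting these yields four cases: two vertices sharing their $K_{r,t}$-coordinate but differing in $K_n$ are at distance $2$; two vertices with distinct $K_{r,t}$-coordinates in the same part are at distance $2$ (regardless of the $K_n$-coordinates); a vertex of $A$ and a vertex of $B$ with equal $K_n$-coordinate are at distance $3$; and a vertex of $A$ and a vertex of $B$ with distinct $K_n$-coordinates are adjacent. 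In particular $D(K_{r,t}\times K_n)=3$, attained exactly by opposite-part pairs within a common layer.

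Next, using $N_{K_{r,t}\times K_n}(g,h)=N_{K_{r,t}}(g)\times(V(K_n)\setminus\{h\})$ from (\ref{neigh}), I would show that inside a fixed layer $V(K_{r,t})\times\{h\}$ every two distinct vertices are mutually maximally distant. Opposite-part pairs in the layer realize the diameter $3$, hence are automatically mutually maximally distant, since no neighbor can be farther than the diameter. For a same-part pair such as $(a_i,h),(a_j,h)$ at distance $2$, every neighbor $(b_k,h'')$ with $h''\ne h$ satisfies $d((a_j,h),(b_k,h''))=1\le 2$ (opposite parts with distinct $K_n$-coordinates are adjacent); the symmetric check shows each vertex is maximally distant from the other. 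Thus each layer induces a complete graph $K_{r+t}$ in $(K_{r,t}\times K_n)_{SR}$, and since $r+t\ge 2$ every vertex has a partner, so the whole vertex set is the boundary.

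Finally I would rule out edges between distinct layers, i.e. show that for $h\ne h'$ the vertices $(g,h),(g',h')$ are never mutually maximally distant. If $g,g'$ lie in the same part, say both in $A$ (the other case being symmetric), the pair is at distance $2$, yet the neighbor $(b_k,h')$ of $(g,h)$---valid since $h'\ne h$---is at distance $3$ from $(g',h')$, so $(g,h)$ fails to be maximally distant from $(g',h')$; if $g,g'$ lie in opposite parts they are adjacent, and a neighbor $(b_k,h'')$ of $(a_i,h)$ with $h''\notin\{h,h'\}$ (available since $n\ge 3$) is at distance $2>1$ from $(b_j,h')$. In every case maximal distantness fails, so no cross-layer pair contributes an edge. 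Combining this with the previous paragraph gives $(K_{r,t}\times K_n)_{SR}\cong\bigcup_{i=1}^n K_{r+t}$. I expect the only delicate point to be the distance computation---particularly establishing $d((a_i,h),(b_j,h))=3$, which rests on $d^o(h,h)=3$ in $K_n$ and hence on the hypothesis $n\ge 3$; once the distance table is in hand, the mutually-maximally-distant bookkeeping is routine.
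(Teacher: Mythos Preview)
Your proposal is correct and follows essentially the same approach as the paper: both arguments identify the mutually maximally distant pairs directly by examining a fixed vertex $(g,h)$ against all others, with the key observation that the diametral pairs (distance $3$) are exactly the opposite-part pairs in the same $K_n$-layer, and that same-part pairs in a layer are twins. Your version is a bit more systematic in first laying out the full distance table via (\ref{distance}) before doing the MMD bookkeeping, whereas the paper proceeds more narratively by partitioning the vertex set into neighbor/non-neighbor groups relative to $(g,h)$, but the underlying logic and case analysis are the same.
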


\begin{proof}
Let $X,Y$ be the bipartition sets of $K_{r,t}$ such that $|X|=r$ and $|Y|=t$. Consider the vertices $g\in X$ and $h\in V(K_n)$. We notice that vertices in $A=Y\times (V(K_n)-\{h\})$ form the open neighborhood of $(g,h)$. Since $n\ge 3$, every vertex from $(X\times V(K_n))-\{(g,h)\})$ has a neighbor in $A$ and viceversa. Thus, vertices of $A$ are not mutually maximally distant with $(g,h)$. On the other hand, the remaining vertices are $Y\times \{h\}$ and they are adjacent to all vertices in $X\times (V(K_n)-\{h\})$. Clearly, any vertex in $Y\times \{h\}$ is mutually maximally distant with $(g,h)$. Moreover, the vertices in $X\times (V(K_n)-\{h\})$ are not mutually maximally distant with $(g,h)$. Finally, we notice that the vertices in $(X-\{g\})\times \{h\}$ are not adjacent to any vertex in $Y\times \{h\}$. So, every vertex in $(X-\{g\})\times \{h\}$ is mutually maximally distant with $(g,h)$. As a consequence, $(g,h)$ is adjacent in $(K_{r,t}\times K_n)_{SR}$ to every vertex of $(V(K_{r,t})-\{g\})\times \{h\}$. Therefore, by symmetry, the proof is completed.
\end{proof}

Our next result is a consequence of Theorems \ref{lem_oellerman} and \ref{lem-K_n-K_r-t}.

\begin{theorem}
For any $r,t\ge 1$ and any $n\ge 3$, $$dim_s(K_{r,t}\times K_n)=n(r+t-1).$$
\end{theorem}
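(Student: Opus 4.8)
The plan is to combine the two results the problem explicitly invokes: Theorem~\ref{lem_oellerman}, which states $dim_s(G) = \beta(G_{SR})$ for any connected graph $G$, and Theorem~\ref{lem-K_n-K_r-t}, which identifies the strong resolving graph $(K_{r,t}\times K_n)_{SR}$ as a disjoint union $\bigcup_{i=1}^n K_{r+t}$. Applying the first with $G = K_{r,t}\times K_n$ gives $dim_s(K_{r,t}\times K_n) = \beta\bigl((K_{r,t}\times K_n)_{SR}\bigr)$, so the entire problem reduces to computing the vertex cover number of $\bigcup_{i=1}^n K_{r+t}$.

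First I would compute $\beta(K_m)$ for a single complete graph on $m = r+t$ vertices. Since every pair of distinct vertices in $K_m$ is joined by an edge, any vertex cover can omit at most one vertex (two omitted vertices would leave the edge between them uncovered), so $\beta(K_m) = m - 1$. Next I would use the fact that the vertex cover number is additive over connected components of a disjoint union: a set covers all edges of $\bigcup_{i=1}^n K_{r+t}$ if and only if its restriction to each copy covers that copy's edges, and the copies share no vertices. Hence
\begin{equation*}
\beta\Bigl(\bigcup_{i=1}^n K_{r+t}\Bigr) = \sum_{i=1}^n \beta(K_{r+t}) = n\,(r+t-1).
\end{equation*}

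Chaining these equalities yields $dim_s(K_{r,t}\times K_n) = n(r+t-1)$, which is exactly the claimed formula. I do not anticipate any real obstacle here, since both the structural characterization of the strong resolving graph and the reduction to vertex cover are already established; the remaining work—the vertex cover number of a complete graph and the additivity over components—is entirely elementary. The only point worth stating carefully is the additivity of $\beta$ over disjoint components, but this is immediate because edges in distinct components are covered independently. Thus the proof is essentially a two-line invocation of the prior theorems together with the trivial observation $\beta(K_m)=m-1$.
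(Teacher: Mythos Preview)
Your proposal is correct and follows exactly the route the paper takes: the paper simply states that the result is a consequence of Theorems~\ref{lem_oellerman} and~\ref{lem-K_n-K_r-t}, and your computation of $\beta\bigl(\bigcup_{i=1}^n K_{r+t}\bigr)=n(r+t-1)$ supplies the one missing elementary step.
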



\subsection{Diameter two graphs}

As we have seen, the complete graphs as a factor of a direct product provide a various palette of results for strong
metric dimension. The natural extension of them are graphs of diameter two and we present some results for them in
this last part.
Since we need to be careful with connectedness of the direct product, we separate the results with respect
to whether one factor is bipartite or not. It is not hard to see that the only bipartite graphs of diameter two are the complete
bipartite graphs $K_{k,\ell}$, where $\max \{k,\ell\}\geq 2$.

Another important measure for the strong metric dimension of a direct product of two graphs of diameter two is when the
factors are triangle free and moreover, when every pair of vertices is on a five-cycle. Hence, we call a graph
in which every pair of vertices is on a common five-cycle a $C_5$-\emph{connected} graph. Clearly, a $C_5$-connected
graph has diameter at most two. Moreover, if $G$ is a triangle free $C_5$-connected graph, then its diameter equals two.
The Petersen graph is $C_5$-connected triangle free graph. The graph $G$ of Figure \ref{fig:C6} is an example of a triangle free graph of diameter
two in which $u$ and $v$ are not on a common five-cycle and $G$ is not $C_5$-connected. The graph $H$ of the same figure is a
triangle free $C_5$-connected graph of diameter two.

\begin{figure}[ht!]
\begin{center}
\begin{tikzpicture}[scale=0.6,style=thick]
\def\vr{4pt}
\path (5,-3) coordinate (a); \path (8,0) coordinate (b);
\path (5,3) coordinate (c); \path (2,0) coordinate (d);
\path (4,0) coordinate (e); \path (5,1) coordinate (f);
\path (6,0) coordinate (g); \path (5,-1) coordinate (h);

\path (-3,-2) coordinate (v); \path (-4,0.5) coordinate (w);
\path (-1.5,0.5) coordinate (x); \path (-4,3) coordinate (y);
\path (-5,-2) coordinate (u); \path (-6.5,0.5) coordinate (z);

\draw (a) -- (b); \draw (g) -- (h);
\draw (c) -- (b); \draw (c) -- (f);
\draw (b) -- (g); \draw (a) -- (h);
\draw (a) -- (d); \draw (c) -- (d);
\draw (d) -- (e); \draw (f) -- (e);
\draw (g) -- (e); \draw (f) -- (h);

\draw (z) -- (u);
\draw (u) -- (v); \draw (u) -- (w);
\draw (v) -- (x); \draw (x) -- (y);
\draw (y) -- (z); \draw (y) -- (w);
\draw (a)  [fill=black] circle (\vr); \draw (b)  [fill=black] circle (\vr);
\draw (c)  [fill=black] circle (\vr); \draw (d)  [fill=black] circle (\vr);
\draw (e)  [fill=black] circle (\vr); \draw (f)  [fill=black] circle (\vr);
\draw (g)  [fill=black] circle (\vr); \draw (h)  [fill=black] circle (\vr);
\draw (u)  [fill=black] circle (\vr); \draw (v)  [fill=black] circle (\vr);
\draw (w)  [fill=black] circle (\vr); \draw (x)  [fill=black] circle (\vr);
\draw (y)  [fill=black] circle (\vr); \draw (z)  [fill=black] circle (\vr);
\draw (2,3) node {$H$}; \draw (-6,3) node {$G$};
\draw (-7,0.5) node {$u$}; \draw (-3.5,0.5) node {$v$};
\end{tikzpicture}
\end{center}
\caption{Two triangle free graphs of diameter two.} \label{fig:C6}
\end{figure}
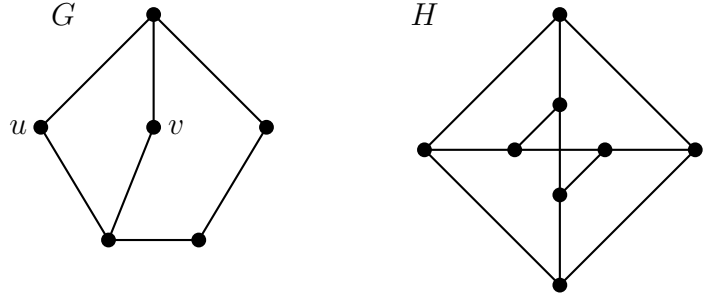

\begin{theorem}\label{complbip}
Let $G$ be a nonbipartite triangle free graph of order $n\ge 2$ and let $\max\{k,\ell\}\geq 2$. If $G$ is $C_5$-connected, then
$$dim_s(G\times K_{k,\ell})=n(k+\ell -1).$$
\end{theorem}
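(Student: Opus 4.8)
The plan is to determine the strong resolving graph $(G\times K_{k,\ell})_{SR}$ explicitly and then invoke Theorem \ref{lem_oellerman}. Since $G$ is connected and nonbipartite while $K_{k,\ell}$ is connected, the product $G\times K_{k,\ell}$ is connected, so the strong metric dimension is defined and Theorem \ref{lem_oellerman} applies. First I would record the even and odd walk distances in each factor, which feed into formula $(\ref{distance})$. Because $G$ is triangle free, $C_5$-connected and hence of diameter two, for vertices $g,g'$ one gets $(d_G^e,d_G^o)=(0,5)$ if $g=g'$, $(4,1)$ if $g\sim g'$, and $(2,3)$ if $g\ne g'$ are nonadjacent; the values $5$ and $3$ rely on every pair lying on a common five-cycle, while the value $4$ (and the absence of $2$ for adjacent vertices) uses triangle freeness. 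In $K_{k,\ell}$, being bipartite, $(d^e,d^o)=(0,\infty)$ for equal vertices, $(\infty,1)$ for adjacent ones, and $(2,\infty)$ for distinct vertices on the same side. Substituting these into $(\ref{distance})$ I would tabulate $d_{G\times K_{k,\ell}}$ for all combinations of the $G$-coordinate relation (equal / adjacent / nonadjacent) and the $K_{k,\ell}$-coordinate relation (equal / adjacent / same side); the resulting distances lie in $\{1,2,3,4,5\}$, the diameter being $5$, attained exactly by pairs $(g,h),(g,h')$ with $h,h'$ on opposite sides.

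Next I would show that inside each fiber $\{g\}\times V(K_{k,\ell})$ every two distinct vertices are mutually maximally distant, so this fiber induces a copy of $K_{k+\ell}$ in the strong resolving graph. Opposite-side pairs realize the diameter $5$ and are therefore mutually maximally distant automatically; for same-side pairs $(g,h),(g,h')$, which are at distance $2$, I would check via $(\ref{neigh})$ that every neighbor $(g'',y)$ of $(g,h)$ satisfies $d_{G\times K_{k,\ell}}((g,h'),(g'',y))=1\le 2$, so $(g,h)$ is maximally distant from $(g,h')$, and symmetrically.

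The crux is to show that no pair $(g,h),(g',h')$ with $g\ne g'$ is mutually maximally distant; this is where $C_5$-connectivity is essential. For each cross case I would exhibit, using $(\ref{neigh})$ and the distance table, a neighbor of one endpoint strictly farther from the other. When $g\sim g'$ the required farther neighbor comes either from $g'$ itself or from a second neighbor of $g$ (every vertex has degree at least two, being on a five-cycle), combined with a suitable $K_{k,\ell}$-coordinate, giving distance exceeding the current value. When $g,g'$ are nonadjacent, I take a common five-cycle $g\,c_1\,g'\,c_2\,c_3\,g$ through them; the vertex $c_3$ is then a neighbor of $g$ with $c_3\ne g'$ and $c_3\not\sim g'$, the nonadjacency following from triangle freeness (a chord $c_3g'$ would create the triangle $c_3g'c_2$). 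Pairing $c_3$ with an appropriate second coordinate yields a neighbor of $(g,h)$ at distance $3$, exceeding the distance $\le 2$ between the endpoints in the relevant subcases. The remaining subcase, where the two $K_{k,\ell}$-coordinates are adjacent (distance $3$ between endpoints), is handled instead by using a common neighbor of $g$ and $g'$ to reach distance $4$.

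Combining the two previous steps gives
$$(G\times K_{k,\ell})_{SR}\cong \bigcup_{i=1}^{n}K_{k+\ell},$$
and since $\beta(K_{k+\ell})=k+\ell-1$, Theorem \ref{lem_oellerman} yields $dim_s(G\times K_{k,\ell})=\beta((G\times K_{k,\ell})_{SR})=n(k+\ell-1)$. I expect the main difficulty to be the uniform treatment of the nonadjacent cross cases, precisely because it is there that the hypothesis of $C_5$-connectivity (rather than mere diameter two) is needed to guarantee the neighbor $c_3$ of $g$ avoiding $g'$.
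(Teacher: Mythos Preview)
Your proposal is correct and follows essentially the same strategy as the paper: compute the even/odd walk distances in each factor, feed them into formula~(\ref{distance}), show that the strong resolving graph is the disjoint union $\bigcup_{i=1}^{n}K_{k+\ell}$ (the paper writes this as $N_n\Box K_{k+\ell}$), and apply Theorem~\ref{lem_oellerman}. The paper argues the same-side case via the twin observation $N_{K_{k,\ell}}(u_1)=N_{K_{k,\ell}}(u_j)$ and handles the cross cases by fixing a concrete five-cycle $g_1g_2g_3g_4g_5g_1$ and checking $g_2,g_3$ by symmetry, whereas you tabulate all nine distance cases first and then dispatch the cross cases by adjacency type; these are the same computations organized differently.
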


\begin{proof}
In order to use Theorem \ref{lem_oellerman} we first describe $(G\times K_{k,\ell})_{SR}$. Let $V(G)=\{g_1,\ldots ,g_n\}$
and $U(K_{k,\ell})=U_1\cup U_2$ where $U_1=\{u_1,\ldots ,u_k\}$ and $U_2=\{v_1,\ldots ,v_{\ell}\}$.
Clearly, $d^e_{K_{k,\ell}}(u_i,v_j)=\infty$, $d^o_{K_{k,\ell}}(u_i,v_j)=1$, $d^o_{K_{k,\ell}}(v_i,v_j)=\infty$ and
$d^o_{K_{k,\ell}}(u_i,u_j)=\infty$ for any $i$ and $j$. Also, $d^e_{K_{k,\ell}}(u_i,u_j)=2$ and $d^e_{K_{k,\ell}}(v_i,v_j)=2$
for every $i\neq j$. Conversely, by $C_5$-connectedness of $G$, $d^e_G(g_i,g_j)$ and $d^o_G(g_i,g_j)$ always exists.
Moreover, $d^e_G(g_i,g_j)$ is between 0 and 4, while $d^o_G(g_i,g_j)$ is between 1 and 5. Hence, by the distance formula
(\ref{distance}) we can have the distances between 0 and 5 in $G\times K_{k,\ell}$.
Again, by this distance formula, it is easy to see that $d_{G\times K_{k,\ell}}((g_1,u_1),(g_1,v_j))=5$ for any $j\in \{1,\ldots ,\ell\}$
and that $d_{G\times K_{k,\ell}}((g_1,u_1),(g_1,u_j))=2$ for any $j\in \{2,\ldots ,k\}$. We show that these are the only neighbors
of $(g_1,u_1)$ in $(G\times K_{k,\ell})_{SR}$. Clearly, $(g_1,u_1)$ and $(g_1,v_j)$ are mutually maximally distant, since they are
diametral vertices for any $j\in \{1,\ldots ,\ell\}$. Since $N_{K_{k,\ell}}(u_1)=N_{K_{k,\ell}}(u_j)$, for any $j\in \{2,\ldots ,k\}$,
by using (\ref{neigh}), we see that $(g_1,u_1)$ and $(g_1,u_j)$ have the same neighborhood and therefore, they are mutually maximally distant.

Next we show that no other vertex of $G\times K_{k,\ell}$ is mutually maximally distant with $(g_1,u_1)$. In this case, we reduce it
to a five-cycle, since $G$ is $C_5$-connected. We may assume that $g_1g_2g_3g_4g_5g_1$ is a five-cycle. By the symmetry of a five-cycle
we need to present the arguments only for $g_2$ and $g_3$. For every $j\in \{1,\ldots ,\ell\}$ and $i\in \{2,\ldots ,\ell\}$ they are as follows:
\begin{itemize}
\item $(g_2,v_j)\sim (g_3,u_1)$ and $(g_2,v_j)$ is closer to $(g_1,u_1)$ than $(g_3,u_1)$;
\item $(g_2,u_i)\sim (g_1,v_1)$ and $(g_2,u_i)$ is closer to $(g_1,u_1)$ than $(g_1,v_1)$;
\item $(g_3,v_j)\sim (g_2,u_1)$ and $(g_3,v_j)$ is closer to $(g_1,u_1)$ than $(g_2,u_1)$;
\item $(g_3,u_i)\sim (g_4,v_1)$ and $(g_3,u_i)$ is closer to $(g_1,u_1)$ than $(g_4,v_1)$.
\end{itemize}
See the graph $C_5\times K_{1,2}\cong C_5\times P_3$ on the left part of Figure \ref{second}, where the distances from $(g_1,u_1)$ are marked.
Thus, the vertex $(g_1,u_1)$ is adjacent to all vertices of $\{g_1\}\times (V(K_{k,\ell})-\{u_1\})$ in $(G\times K_{k,\ell})_{SR}$.
(Notice that the same argument hold also when $\min\{k,\ell\}=1$.)
We can use the same arguments for any vertex of $G\times K_{k,\ell}$ and therefore, we have $(G\times K_{k,\ell})_{SR}\cong N_n\Box K_{k+\ell}$.
By Theorem \ref{lem_oellerman} we have that $dim_s(G\times K_{k,\ell})=\beta ((G\times K_{k,\ell})_{SR})=n\beta(K_{k+\ell})=n(k+\ell -1)$
and the proof is completed.
\end{proof}


\begin{figure}[ht!]
\begin{center}
\begin{tikzpicture}[scale=0.6,style=thick]
\def\vr{4pt}
\path (2,-4) coordinate (a1); \path (4,-4) coordinate (a2);
\path (6,-4) coordinate (a3); \path (8,-4) coordinate (a4);
\path (10,-4) coordinate (a5); \path (2,-2) coordinate (b1);
\path (4,-2) coordinate (b2); \path (6,-2) coordinate (b3);
\path (8,-2) coordinate (b4); \path (10,-2) coordinate (b5);
\path (2,0) coordinate (c1); \path (4,0) coordinate (c2);
\path (6,0) coordinate (c3); \path (8,0) coordinate (c4);
\path (10,0) coordinate (c5); \path (2,2) coordinate (d1);
\path (4,2) coordinate (d2); \path (6,2) coordinate (d3);
\path (8,2) coordinate (d4); \path (10,2) coordinate (d5);
\path (2,4) coordinate (e1); \path (4,4) coordinate (e2);
\path (6,4) coordinate (e3); \path (8,4) coordinate (e4);
\path (10,4) coordinate (e5);
\path (6.2,-0.7) coordinate (x); \path (6.2,0.7) coordinate (y);

\path (-10,-3) coordinate (u1); \path (-8,-3) coordinate (u2);
\path (-6,-3) coordinate (u3); \path (-4,-3) coordinate (u4);
\path (-2,-3) coordinate (u5); \path (-10,0) coordinate (v1);
\path (-8,0) coordinate (v2); \path (-6,0) coordinate (v3);
\path (-4,0) coordinate (v4); \path (-2,0) coordinate (v5);
\path (-10,3) coordinate (w1); \path (-8,3) coordinate (w2);
\path (-6,3) coordinate (w3); \path (-4,3) coordinate (w4);
\path (-2,3) coordinate (w5);

\draw (a1) -- (b2); \draw (a1) -- (e2);
\draw (a1) -- (b5); 
\draw (a2) -- (b1); \draw (a2) -- (e1);
\draw (a2) -- (b3); \draw (a2) -- (e3);
\draw (a3) -- (b2); \draw (a3) -- (e2);
\draw (a3) -- (b4); \draw (a3) -- (e4);
\draw (a4) -- (b3); \draw (a4) -- (e3);
\draw (a4) -- (b5); \draw (a4) -- (e5);
\draw (a5) -- (b1); 
\draw (a5) -- (b4); \draw (a5) -- (e4);
\draw (b1) -- (c2); \draw (b1) -- (c5);
\draw (b2) -- (c1); \draw (b2) -- (c3);
\draw (b3) -- (c2); \draw (b3) -- (c4);
\draw (b4) -- (c3); \draw (b4) -- (c5);
\draw (b5) -- (c1); \draw (b5) -- (c4);
\draw (c1) -- (d2); \draw (c1) -- (d5);
\draw (c2) -- (d1); \draw (c2) -- (d3);
\draw (c3) -- (d2); \draw (c3) -- (d4);
\draw (c4) -- (d3); \draw (c4) -- (d5);
\draw (c5) -- (d1); \draw (c5) -- (d4);
\draw (d1) -- (e2); \draw (d1) -- (e5);
\draw (d2) -- (e1); \draw (d2) -- (e3);
\draw (d3) -- (e2); \draw (d3) -- (e4);
\draw (d4) -- (e3); \draw (d4) -- (e5);
\draw (d5) -- (e1); \draw (d5) -- (e4);
\draw (e5) -- (x); \draw (x) -- (a1);
\draw (y) -- (e1); \draw (a5) -- (y);

\draw (u1) -- (v2); \draw (u1) -- (v5);
\draw (u2) -- (v1); \draw (u2) -- (v3);
\draw (u3) -- (v2); \draw (u3) -- (v4);
\draw (u4) -- (v3); \draw (u4) -- (v5);
\draw (u5) -- (v4); \draw (u5) -- (v1);
\draw (v1) -- (w2); \draw (v1) -- (w5);
\draw (v2) -- (w3); \draw (v2) -- (w1);
\draw (v3) -- (w2); \draw (v3) -- (w4);
\draw (v4) -- (w3); \draw (v4) -- (w5);
\draw (v5) -- (w1); \draw (v5) -- (w4);
\draw (a1)  [fill=black] circle (\vr); \draw (a2)  [fill=black] circle (\vr);
\draw (a3)  [fill=black] circle (\vr); \draw (a4)  [fill=black] circle (\vr);
\draw (a5)  [fill=black] circle (\vr); \draw (b1)  [fill=black] circle (\vr);
\draw (b2)  [fill=black] circle (\vr); \draw (b3)  [fill=black] circle (\vr);
\draw (b4)  [fill=black] circle (\vr); \draw (b5)  [fill=black] circle (\vr);
\draw (c1)  [fill=black] circle (\vr); \draw (c2)  [fill=black] circle (\vr);
\draw (c3)  [fill=black] circle (\vr); \draw (c4)  [fill=black] circle (\vr);
\draw (c5)  [fill=black] circle (\vr); \draw (d1)  [fill=black] circle (\vr);
\draw (d2)  [fill=black] circle (\vr); \draw (d3)  [fill=black] circle (\vr);
\draw (d4)  [fill=black] circle (\vr); \draw (d5)  [fill=black] circle (\vr);
\draw (e1)  [fill=black] circle (\vr); \draw (e2)  [fill=black] circle (\vr);
\draw (e3)  [fill=black] circle (\vr); \draw (e4)  [fill=black] circle (\vr);
\draw (e5)  [fill=black] circle (\vr);

\draw (u1)  [fill=black] circle (\vr);
\draw (u2)  [fill=black] circle (\vr); \draw (u3)  [fill=black] circle (\vr);
\draw (u4)  [fill=black] circle (\vr); \draw (u5)  [fill=black] circle (\vr);
\draw (v1)  [fill=black] circle (\vr); \draw (v2)  [fill=black] circle (\vr);
\draw (v3)  [fill=black] circle (\vr); \draw (v4)  [fill=black] circle (\vr);
\draw (v5)  [fill=black] circle (\vr); \draw (w1)  [fill=black] circle (\vr);
\draw (w2)  [fill=black] circle (\vr); \draw (w3)  [fill=black] circle (\vr);
\draw (w4)  [fill=black] circle (\vr); \draw (w5)  [fill=black] circle (\vr);

\draw (-10.5,-3.3) node {$0$}; \draw (-7.5,-3.3) node {$4$};
\draw (-5.5,-3.3) node {$2$}; \draw (-3.5,-3.3) node {$2$};
\draw (-1.5,-3.3) node {$4$}; \draw (-10.5,0) node {$5$};
\draw (-7.5,0) node {$1$}; \draw (-5.5,0) node {$3$};
\draw (-3.5,0) node {$3$}; \draw (-1.5,0) node {$1$};
\draw (-10.5,3.3) node {$2$}; \draw (-7.5,3.3) node {$4$};
\draw (-5.5,3.3) node {$2$}; \draw (-3.3,3.3) node {$2$};
\draw (-1.5,3.3) node {$4$}; \draw (-10,-5) node {$g_1$};
\draw (-8,-5) node {$g_2$}; \draw (-6,-5) node {$g_3$};
\draw (-4,-5) node {$g_4$}; \draw (-2,-5) node {$g_5$};
\draw (-11.5,-3) node {$u_1$}; \draw (-11.3,3) node {$u_2$};
\draw (-11.5,0) node {$v_1$}; 

\draw (1.5,-4.3) node {$0$}; \draw (4.5,-4.3) node {$4$};
\draw (6.5,-4.3) node {$2$}; \draw (8.5,-4.3) node {$2$};
\draw (10.5,-4.3) node {$4$}; \draw (1.5,-2) node {$4$};
\draw (4.4,-2) node {$1$}; \draw (6.5,-2) node {$3$};
\draw (8.5,-2) node {$3$}; \draw (10.5,-2) node {$1$};
\draw (1.5,0) node {$2$}; \draw (4.5,0) node {$3$};
\draw (6.5,0) node {$2$}; \draw (8.5,0) node {$2$};
\draw (10.5,0) node {$3$}; \draw (2,-5.5) node {$g_1$};
\draw (4,-5.5) node {$g_2$}; \draw (6,-5.5) node {$g_3$};
\draw (8,-5.5) node {$g_4$}; \draw (10,-5.5) node {$g_5$};
\draw (0.5,-4) node {$h_1$}; \draw (0.5,-2) node {$h_2$};
\draw (0.5,0) node {$h_3$}; \draw (0.5,2) node {$h_4$};
\draw (0.5,4) node {$h_5$}; \draw (1.5,2) node {$2$};
\draw (4.5,2) node {$3$}; \draw (6.5,2) node {$2$};
\draw (8.5,2) node {$2$}; \draw (10.5,2) node {$3$};
\draw (1.5,4.3) node {$4$}; \draw (4.5,4.3) node {$1$};
\draw (6.5,4.3) node {$3$}; \draw (8.5,4.3) node {$3$};
\draw (10.5,4.3) node {$1$};
\end{tikzpicture}
\end{center}
\caption{Situations from the proofs of Theorems \ref{complbip} and \ref{C5connected}.} \label{second}
\end{figure}
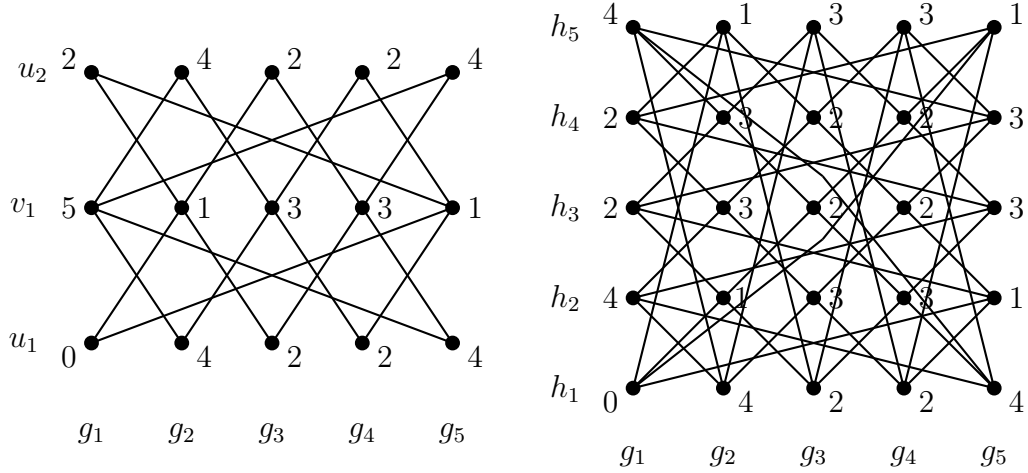

\begin{theorem}\label{C5connected}
For any nonbipartite triangle free $C_5$-connected graphs $G$ and $H$ of diameter two, $$dim_s(G\times H)=\beta(G\Box H).$$
\end{theorem}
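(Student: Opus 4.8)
The plan is to prove that the strong resolving graph satisfies $(G\times H)_{SR}\cong G\Box H$; once this isomorphism is in hand, Theorem \ref{lem_oellerman} gives $dim_s(G\times H)=\beta((G\times H)_{SR})=\beta(G\Box H)$ at once. Since both $G$ and $H$ are nonbipartite, the product $G\times H$ is connected, so the hypothesis of Theorem \ref{lem_oellerman} is met, and we are reduced to a purely structural computation of the mutually maximally distant pairs.

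First I would record, for a single triangle free $C_5$-connected factor, the even and odd walk lengths between any two vertices. Such a graph has diameter two and odd girth five, so a short check shows that for vertices $x,y$: if $x=y$ then $(d^e,d^o)=(0,5)$; if $x\sim y$ then $(d^e,d^o)=(4,1)$; and if $x\neq y$ with $x\not\sim y$ then $(d^e,d^o)=(2,3)$. Substituting these three ``types'' into the distance formula (\ref{distance}) yields a complete table for the distances in $G\times H$: the value is one of $0,1,2,3,4$ according to the types of the two coordinates, and one finds that the diameter of $G\times H$ equals four, attained exactly when one coordinate is equal and the other adjacent, that is, precisely on the edges of $G\Box H$. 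The right part of Figure \ref{second} illustrates this for $G=H=C_5$.

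Next I would determine the mutually maximally distant pairs. The pairs realizing distance four are diametral, hence automatically mutually maximally distant, and these are exactly the edges of $G\Box H$; this gives the edges we want. It remains to rule out every other type of pair. The uniform tactic is: given a pair $(g,h),(g',h')$ that is not a $G\Box H$ edge, exhibit (using (\ref{neigh})) a neighbour $(a,b)$ of one of them, with $a\sim g$ and $b\sim h$, whose distance to the other vertex strictly exceeds the current distance, so that vertex fails to be maximally distant. The combinatorial input I would invoke repeatedly is that in a triangle free $C_5$-connected graph two vertices at distance two have a common neighbour (from the five-cycle through them), while for such an endpoint $h$ there is also a neighbour \emph{not} adjacent to $h'$ (otherwise the five-cycle would create a triangle); for adjacent vertices triangle freeness forbids common neighbours but provides a second, non-adjacent neighbour. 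Using common neighbours to manufacture an equal coordinate and non-neighbours to manufacture a distance-two coordinate, one pushes the pairs of type (adjacent, adjacent), (equal, non-adjacent), (adjacent, non-adjacent) and (non-adjacent, non-adjacent), together with their symmetric variants, strictly inward, so none of them is mutually maximally distant. Since every vertex lies on some $G\Box H$ edge, the boundary is all of $V(G\times H)$, and we conclude $(G\times H)_{SR}\cong G\Box H$.

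I expect the main obstacle to be the bookkeeping in this ``no other edges'' step: verifying, case by case, that the witnessing neighbour always exists and lands in the intended distance class, which is exactly where the $C_5$-connectedness (for the common neighbour) and the triangle freeness (for the non-neighbour) must be used with care. As in the proof of Theorem \ref{complbip}, the symmetry between the two factors lets me treat each configuration only once and then quote it for the mirror case, so the argument stays manageable despite the number of subcases.
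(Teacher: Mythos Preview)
Your proposal is correct and follows essentially the same route as the paper: both arguments show $(G\times H)_{SR}\cong G\Box H$ by computing distances via formula (\ref{distance}), identifying the diametral pairs as exactly the $G\Box H$ edges, and then excluding every other pair from being mutually maximally distant by exhibiting a suitable neighbour, after which Theorem \ref{lem_oellerman} finishes. The only difference is presentational: you organise the computation through the three ``types'' (equal, adjacent, non-adjacent) and the resulting $(d^e,d^o)$ table, whereas the paper fixes a concrete induced $5$-cycle in each factor and lists the witnesses vertex by vertex, but the underlying combinatorics and the use of $C_5$-connectedness and triangle-freeness are the same.
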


\begin{proof}
In order to use Theorem \ref{lem_oellerman} we first describe $(G\times H)_{SR}$. Let $V(G)=\{g_1,\ldots ,g_n\}$
and $V(H)=\{h_1,\ldots ,h_k\}$. Graphs $G$ and $H$ are $C_5$-connected graphs, which imply that their even and odd distances between
arbitrary vertices always exist. Moreover, the even distances are between 0 and 4, while the odd distances are between 1 and 5. Hence, by the distance
formula (\ref{distance}), we can have the distances between 0 and 4 in $G\times H$. We may assume that $g_1g_2g_3g_4g_5g_1$
and $h_1h_2h_3h_4h_5h_1$ are induced five-cycles of triangle free $C_5$-connected graphs $G$ and $H$, respectively. Again, by this
distance formula, it is easy to see that $d_{G\times H}((g_1,h_1),(g_1,h_j))=4$ for $j\in \{2,5\}$ and that $d_{G\times H}((g_1,h_1),(g_j,h_1))=4$ for
$j\in \{2,5\}$. We show that these are the only neighbors of $(g_1,u_1)$ in $(G\times H)_{SR}$. Clearly, these pairs are mutually
maximally distant since they are diametrical vertices.

Next we show that no other vertex of $G\times K_{k,\ell}$ is mutually maximally distant with $(g_1,u_1)$. By the symmetry of a five-cycle
and the commutativity of the direct product we need to present the arguments only for $g_1,g_2$ and $g_3$ and for $h_1,h_2$ and $h_3$.
They are as follows:
\begin{itemize}
\item $(g_1,h_3)\sim (g_2,h_4)$ and $(g_1,h_3)$ is closer to $(g_1,h_1)$ than $(g_2,h_4)$;
\item $(g_2,h_2)\sim (g_3,h_1)$ and $(g_2,h_2)$ is closer to $(g_1,h_1)$ than $(g_3,h_1)$;
\item $(g_2,h_3)\sim (g_1,h_2)$ and $(g_2,h_3)$ is closer to $(g_1,h_1)$ than $(g_1,h_2)$;
\item $(g_3,h_1)\sim (g_4,h_2)$ and $(g_3,h_1)$ is closer to $(g_1,h_1)$ than $(g_4,h_2)$;
\item $(g_3,h_2)\sim (g_2,h_1)$ and $(g_3,h_2)$ is closer to $(g_1,h_1)$ than $(g_2,h_1)$;
\item $(g_3,h_3)\sim (g_2,h_4)$ and $(g_3,h_3)$ is closer to $(g_1,h_1)$ than $(g_2,h_4)$.
\end{itemize}
See the graph $C_5\times C_5$ on the right part of Figure \ref{second}, where the distances from $(g_1,h_1)$ are marked.
Thus, the vertex $(g_1,h_1)$ is adjacent to $(g_1,h_2),(g_1,h_5),(g_2,h_1)$ and $(g_5,h_1)$ in $(G\times K_{k,\ell})_{SR}$.
Continuing with the same arguments, we obtain that $(g_1,u_1)$ is adjacent to all vertices of
$(\{g_1\}\times N_H(h_1))\cup (N_G(g_1)\times \{h_1\})$ in $(G\times H)_{SR}$.
We can use the same arguments for any vertex of $G\times H$ and therefore, we obtain $(G\times H)_{SR}\cong G\Box H$.
By Theorem \ref{lem_oellerman} we have that $dim_s(G\times H)=\beta ((G\times H)_{SR})=\beta(G\Box H)$,
and the proof is completed.
\end{proof}


\end{document}